\documentclass[a4paper,reqno]{amsart}

\usepackage{microtype}
\usepackage{amsmath}
\usepackage{amssymb}
\usepackage{url}
\usepackage{fullpage}
\usepackage{cleveref}
\usepackage{color}
\usepackage{enumitem}
\usepackage{txfonts}
\usepackage[foot]{amsaddr}

\newtheorem{theorem}{Theorem}[section]
\newtheorem{lemma}[theorem]{Lemma}
\newtheorem{proposition}[theorem]{Proposition}
\newtheorem{corollary}[theorem]{Corollary}
\newtheorem{example}[theorem]{Example}
\newtheorem{definition}[theorem]{Definition}
\newtheorem{remark}[theorem]{Remark}

\Crefname{theorem}{Theorem}{Theorems}
\Crefname{corollary}{Corollary}{Corollarys}
\Crefname{lemma}{Lemma}{Lemmas}
\Crefname{proposition}{Proposition}{Propositions}
\Crefname{definition}{Definition}{Definitions}
\Crefname{example}{Example}{Examples}
\Crefname{remark}{Remark}{Remarks}
\Crefname{figure}{Figure}{Figures}

\setlength{\parskip}{0.4\baselineskip plus 0.2\baselineskip minus 0.2\baselineskip}
\setlength\parindent{0pt}

\begin{document}

\allowdisplaybreaks

\title[Embedding the symbolic dynamics of Lorenz maps]{Embedding the symbolic dynamics of Lorenz maps}

\author{T.~Samuel}
\address{Fachbereich 3 -- Mathematik, Universit\"at Bremen, 28359 Bremen, Germany.}
\author{N.~Snigireva}
\address{School of Mathematical Sciences, University College Dublin, Belfield, Dublin 4, Ireland.}
\author{Andrew Vince}
\address{Department of Mathematics, University of Florida, Gaineville, Florida, USA.}

\begin{abstract}
Necessary and sufficient conditions for the symbolic dynamics of a Lorenz map to be fully embedded in the symbolic dynamics of a piecewise continuous interval map are given.  As an application of this result, we describe a new algorithm for calculating the topological entropy of a Lorenz map.
\end{abstract}

\maketitle

\vspace{-1.5em}

\section{Introduction}

Lorenz maps and their topological entropy have been and still are investigated intensively, see for instance \cite{BHV,G,GH,GC,H,HR,HS} and references therein. The simplest example of a Lorenz map is a $\beta$-transformation. The topological entropy of such transformation is well known \cite{Par}. However, for a general Lorenz map the question of determining the topological entropy is much more complicated.  Glendinning \cite{G} showed that every Lorenz map is semi-conjugate to a \mbox{$\beta$-transformation} and thus some features of a Lorenz map can be understood via \mbox{$\beta$-transformations}. In this paper, we  investigate the relation between the symbolic dynamics of a given Lorenz map and that of a $\beta$-transformation.  In particular, this will allow us to obtain upper and lower bounds on the entropy of a general Lorenz map.  Let us now outline our main results.
\begin{description}
\item[(i) \textit{Embedding dynamics}:] Our main results, \Cref{Main_THM,THM:END}, give necessary and sufficient conditions for when the address space (\Cref{Defn:Itinerary_map3}) of an arbitrary Lorenz system is a forward shift sub-invariant subset (\Cref{DEFN:SHIFT_INV}) of the address space of a uniform Lorenz system.  (See \Cref{Defn:Lorenz_map} for the definition of a Lorenz system.)  These results complement those of  
\cite{BHV,GH,HS}.
\item[(ii) \textit{An algorithm}:] Based on (i), we provide, in \Cref{Sec:Algorithm}, an algorithm for calculating the topological entropy of a Lorenz system.  This algorithm does not require previously used techniques of finding zeros of a power series \cite{AM,BHV,GH} nor the calculation of the zero of a pressure functional \cite{FMT}.
\end{description} 

\subsection{Motivation and previous related results}

A main motivation for the study of Lorenz maps is that they arise naturally in the investigation of a geometric model of Lorenz differential equations which have strange attractors, see \cite{Eckhardt,Lorenz:1963,Vis,Wil} and references therein.   A second motivation is that a $\beta$-transformation (being the simplest example of a Lorenz map) plays an important role in ergodic theory, see \cite{DK,G,H,Par} and references therein. A third motivation comes from the study of fractal transformation, see \cite{BHI}.

Results from kneading theory are used in the study of Lorenz maps. In 1990, Hubbard and Sparrow \cite{HS} showed that the upper and lower itineraries of the critical point fully determine the address space of a Lorenz map. Moreover, Glendinning and Hall \cite{GH} showed that the topological entropy of such a map is related to the largest positive zero of a certain power series.  Further results on the kneading sequences of Lorenz maps can be found, for instance, in the works of Hofbauer and Raith \cite{H,HR}, Alsed\'{a} and Ma\~{n}os \cite{AM}, Misiurewicz \cite{M} and Glendinning, Hall and Sparrow \cite{G,GH,GC}.

\subsection{Main results}

To formally state our main results we require the following notation and definitions.

\begin{definition}\label{Defn:Lorenz_map}
An \textit{upper} (or \textit{lower}) \textit{Lorenz map} with \textit{critical point} $q \in (0, 1)$ is a piecewise continuous map $T^{+}$ (respectively $T^{-}$) $: [0, 1] \circlearrowleft$ of the form
\begin{align*}
T^{+}(x) \coloneqq
\begin{cases}
f_{0}(x) & \text{if} \; 0 \leq x < q,\\
f_{1}(x) & \text{if} \; q \leq x \leq 1,
\end{cases}
\;\;\left( \text{respectively} \;\;
T^{-}(x) \coloneqq
\begin{cases}
f_{0}(x) & \text{if} \; 0 \leq x \leq q,\\
f_{1}(x) & \text{if} \; q < x \leq 1,
\end{cases}\right)
\end{align*}
where
\begin{enumerate}
\item $f_{0}: [0, q] \to [0, 1]$ and $f_{1}: [q, 1] \to [0, 1]$ are continuous, strictly increasing, functions, with $f_{0}(0) = 0$ and $f_{1}(1) = 1$ and either $1 > f_{0}(q) > f_{1}(q) \geq 0$ or $1 \geq f_{0}(q) > f_{1}(q) > 0$, and
\item there exists $s > 1$ such that $\lvert f_{i}(x) - f_{i}(y) \rvert \geq s \lvert x - y \rvert$, for $i \in \{ 0, 1\}$ and $x \in [0, 1]$.
\end{enumerate}
A \textit{Lorenz} (\textit{dynamical}) \textit{system} with critical point $q$ is defined to be a dynamical system $([0, 1], T)$, where $T$ is either an upper or lower Lorenz map with critical point $q$.  
\end{definition}

\begin{definition}
A tuple  $(a, p)$ is called \textit{admissible} if it belongs to the set $\{ (z, w) \in (1, 2) \times (0, 1) : 1 - z^{-1} \leq w \leq z^{-1} \}$.  An upper or lower Lorenz map with critical point $p$ is called \textit{uniform} if $(a, p)$ is admissible and if $f'_{0}(x) = a = f'_{1}(y)$, for all $x \in (0, p)$ and $y \in (p, 1)$.  We denote such maps by the symbols $U^{+}_{a, p}$ or $U^{-}_{a, p}$ respectively.  Specifically, the maps $U^{+}_{a, p}$ and $U^{-}_{a, p}$ are given by,
\begin{align*}
U^{+}_{a, p}(x) \coloneqq
\begin{cases}
a x & \text{if} \; 0 \leq x < p,\\
a x + 1 -a & \text{if} \; p \leq x \leq 1,
\end{cases}
\quad
U^{-}_{a, p}(x) \coloneqq
\begin{cases}
a x & \text{if} \; 0 \leq x \leq p,\\
a x + 1 -a & \text{if} \; p < x \leq 1.
\end{cases}
\end{align*}
\end{definition}

Throughout we use the convention that $\pm$ means either $+$ or $-$.  When we write, `given a Lorenz map $T^{\pm}$ with critical point $q$', we require both $T^{+}$ and $T^{-}$ to be defined using the same functions $f_{0}$ and $f_{1}$.  Further, let $\mathbb{N}$ denote the set of positive integers, $\mathbb{N}_{0}$ denote the set of non-negative integers and $\mathbb{R}$ denote the set of real numbers.

We let $\Omega \coloneqq \{0, 1\}^{\infty}$ denote the set of all infinite strings $\omega_{0} \, \omega_{1} \, \omega_{2} \cdots$ consisting of elements of the set $\{0, 1 \}$.  It is well-known that the set $\Omega$ is a complete compact metric space with respect to the metric $d: \Omega \times \Omega \to \mathbb{R}$ given by
\begin{align*}
d(\omega, \sigma) \coloneqq
\begin{cases}
0 & \text{if} \; \omega = \sigma,\\
2^{- \lvert\omega \wedge \sigma\rvert} & \text{otherwise},
\end{cases}
\end{align*}
where $\rvert\omega \wedge \sigma\lvert \coloneqq \min \{ \, n \in \mathbb{N} \, : \, \omega_{n} \neq \sigma_n \}$, for all $\omega \coloneqq \omega_{0} \, \omega_{1} \, \cdots, \sigma \coloneqq \sigma_{0} \, \sigma_{1} \, \cdots \in \Omega$ with $\omega \neq \sigma$.  Throughout we assume that $\Omega$ is equipped with the metric $d$ and is endowed with the lexicographic ordering which will be denoted by the symbols $\succ$ and $\prec$.  

\begin{definition}\label{Defn:Itinerary_map1}
The \textit{upper} (or \textit{lower}) \textit{itinerary}, $\tau_{q}^{+}(x)$ (respectively $\tau^{-}_{q}(x)$) of a point $x \in [0,1]$ under $T^{+}$ (respectively $T^{-}$) with critical point $q$ is the string $\omega_{0} \, \omega_{1} \, \omega_{2} \, \cdots \in \Omega$ (respectively $\sigma_{0} \, \sigma_{1} \, \sigma_{2}\, \cdots \in \Omega$), where
\begin{align*}
 \omega_{k} \coloneqq \begin{cases}
0 & \text{if} \; (T^{+})^{k}(x) < q\\
1 & \text{if} \; (T^{+})^{k}(x) \geq q.
\end{cases}
\quad\left(\text{respectively} \quad
\sigma_{k} \coloneqq \begin{cases}
0 & \text{if} \; (T^{-})^{k}(x) \leq q\\
1 & \text{if} \; (T^{-})^{k}(x) > q.
\end{cases}
\right).
\end{align*}
To distinguish the itinerary map of a uniform Lorenz map $U_{a,p}^{\pm}$ we use the symbol $\mu_{a, p}^{\pm}$.
\end{definition}

Let $(T^{+})^{n}$ denote the $n$-fold composition of $T^{+}$ with itself, where $(T^{+})^{0}(x) \coloneqq x$ for $x\in [0,1]$ and $n \in \mathbb{N}$.

\begin{definition}\label{Defn:Itinerary_map3}
Given a Lorenz map $T^{\pm}: [0, 1] \circlearrowleft$ with critical point $q$, we let $\Omega_{q}^{\pm} \subset \Omega$ denote the image of the unit interval $[0,1]$ under the mapping $\tau_{q}^{\pm}$.   The set $\Omega_{q}^{\pm}$ is called the \textit{address space} of the dynamical system $([0, 1], T^{\pm})$.  To distinguish the address space of a uniform Lorenz system $([0, 1], U_{a, p}^{\pm})$, we we use the symbol $\Omega_{a, p}^{\pm}$.
\end{definition}

Given a Lorenz map $T^{\pm}$, we let $h(T^{\pm})$ denote its topological entropy, which we will define  in \Cref{Section2}.  Since $h(T^{+}) = h(T^{-})$, we let $h(T)$ denote this common value; see \Cref{RMK:+=-}.

Finally, let $g_{0, a}(x) \coloneqq x/a$ and $g_{1,a}(x) \coloneqq x/a + (1-a^{-1})$, for each $a \in (1, 2)$ and $x \in [0, 1]$.  The \textit{coding map} $\pi_{a}: \Omega \to [0, 1]$ is defined by
\begin{align*}\label{EQ:Projection_Map}
\pi_a (\omega_{0} \, \omega_{1} \, \omega_{2} \cdots) \coloneqq \lim_{n \to \infty} g_{\omega_{0}, a} \circ g_{\omega_{1}, a} \circ \dots \circ g_{\omega_{n}, a}(1) = \left(1- a^{-1}\right) \sum_{k=0}^{\infty} \, \omega_{k} \, a^{-k}.
\end{align*}
With the above we can now state our main results. For ease of notation we let $\alpha \coloneqq \tau_{q}^{-}(q)$ and $\beta \coloneqq \tau_{q}^{+}(q)$. 

\begin{theorem}\label{Main_THM}
Let $([0,1], T^{\pm})$ denote a Lorenz system with critical point $q$ such that $T^{-}(q) \neq 1$ and $T^{+}(q) \neq 0$. Then the following statements are equivalent for each $a \in \mathbb{R}$.
\begin{enumerate}
\item The value $a$ belongs to the open interval $(\exp(h(T)), 2)$.
\item The open interval $(\pi_{a}(\alpha),\pi_{a}(\beta)) \cap (1-a^{-1}, a^{-1})$ is non-empty and $\alpha \prec \mu_{a, p}^{-}(p) \prec \mu_{a, p}^{+}(p) \prec \beta$, for all $p\in (\pi_{a}(\alpha),\pi_{a}(\beta)) \cap (1-a^{-1}, a^{-1})$.
\item The open interval $(\pi_{a}(\alpha),\pi_{a}(\beta)) \cap (1-a^{-1}, a^{-1})$ is non-empty and $\Omega_{q}^{-} \subset \Omega_{a, p}^{-}$ and $\Omega_{q}^{+} \subset \Omega_{a, p}^{+}$, for all $p \in (\pi_{a}(\alpha),\pi_{a}(\beta)) \cap (1-a^{-1}, a^{-1})$.
\end{enumerate}
\end{theorem}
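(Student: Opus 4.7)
The plan is to establish the chain (ii)~$\Leftrightarrow$~(iii)~$\Rightarrow$~(i)~$\Rightarrow$~(ii), using kneading theory, the well-known value of the topological entropy of a uniform Lorenz map, and Glendinning's semi-conjugacy with a $\beta$-transformation~\cite{G}. I begin by recording the kneading characterization of the address space: $\omega \in \Omega_q^{\pm}$ if and only if for every $k \in \mathbb{N}_0$ one has $\sigma^{k}\omega \preceq \alpha$ whenever $\omega_{k}=0$ and $\sigma^{k}\omega \succeq \beta$ whenever $\omega_{k}=1$, the strictness of the inequalities being dictated by the upper/lower convention together with the hypotheses $T^{-}(q)\neq 1$ and $T^{+}(q)\neq 0$. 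This is a direct consequence of the strict monotonicity of $f_{0}$ and $f_{1}$ and extends the Hubbard--Sparrow criterion of~\cite{HS}.

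For the equivalence (ii)~$\Leftrightarrow$~(iii) I would apply this characterization both to $T^{\pm}$ and to $U_{a,p}^{\pm}$. The inequality $\mu_{a,p}^{-}(p)\succ \alpha$ says precisely that the ``$\omega_{k}=0$'' constraint for $\Omega_{a,p}^{-}$ is weaker than that for $\Omega_{q}^{-}$, and symmetrically $\mu_{a,p}^{+}(p)\prec \beta$ expresses that the ``$\omega_{k}=1$'' constraint is weaker; together they are equivalent to $\Omega_{q}^{\pm}\subset \Omega_{a,p}^{\pm}$. The non-emptiness condition is, in both formulations, the statement that there exists an admissible $p$ for which the uniform map $U_{a,p}^{\pm}$ is well defined and its kneading sequence interpolates strictly between $\alpha$ and $\beta$; through the coding map $\pi_{a}$ defined in~\eqref{EQ:Projection_Map} this translates into $p\in (\pi_{a}(\alpha),\pi_{a}(\beta))\cap (1-a^{-1},a^{-1})$.

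For (iii)~$\Rightarrow$~(i), the inclusion $\Omega_{q}^{\pm}\subset \Omega_{a,p}^{\pm}$ together with the fact that the shift on $\Omega_{a,p}^{\pm}$ has topological entropy $\log a$ immediately gives $h(T)\leq \log a$, while admissibility forces $a<2$. To obtain the strict inequality $a>\exp(h(T))$, I would note that (ii) must hold uniformly on an \emph{open} interval of $p$: an equality $h(T)=\log a$ would, via Glendinning's semi-conjugacy, force $\alpha$ and $\beta$ to coincide with extremal members of the family $\{\mu_{a,p}^{\pm}(p)\}_{p}$, contradicting the strictness of $\alpha\prec\mu_{a,p}^{-}(p)\prec\mu_{a,p}^{+}(p)\prec\beta$ across the interval. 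For the converse implication (i)~$\Rightarrow$~(ii), I would use the same semi-conjugacy from the other side: since $a>\exp(h(T))$, the coding $\pi_{a}$ strictly separates $\alpha$ and $\beta$, and a continuity/monotonicity argument compares this separation to the admissibility envelope to produce the required non-empty open sub-interval of $p$-values. For each such $p$, the strict chain in (ii) is then extracted from the monotonicity of $p\mapsto \mu_{a,p}^{\pm}(p)$ in the lexicographic order.

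The main obstacle lies in the (i)~$\Rightarrow$~(ii) direction, specifically verifying that the non-emptiness of $(\pi_{a}(\alpha),\pi_{a}(\beta))\cap(1-a^{-1},a^{-1})$ really does hold for \emph{every} $a\in(\exp(h(T)),2)$, rather than merely for $a$ sufficiently close to $2$. This demands a precise understanding of how $\pi_{a}(\alpha)$ and $\pi_{a}(\beta)$ move with $a$ and how they interact with the admissibility window through Glendinning's semi-conjugacy. A secondary but pervasive technicality is the careful tracking of the upper versus lower itinerary conventions, which determine whether each inequality in the kneading characterization is strict or not and therefore affects each step of the argument above.
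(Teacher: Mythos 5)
Your overall architecture is workable, and two of the three arrows essentially match the paper: (ii) $\Rightarrow$ (iii) is immediate from the Hubbard--Sparrow characterization (\Cref{thm:Main1.5}), and your sketch of (iii) $\Rightarrow$ (i) --- the entropy comparison for the non-strict bound followed by Glendinning's semi-conjugacy and the monotonicity of $p \mapsto \mu_{a,p}^{\pm}(p)$ to exclude equality --- is exactly the paper's argument. The genuine gap is in (i) $\Rightarrow$ (ii), and you have flagged only half of what is missing there. The non-emptiness of $(\pi_{a}(\alpha),\pi_{a}(\beta)) \cap (1-a^{-1}, a^{-1})$ for \emph{every} $a \in (\exp(h(T)), 2)$ is not a soft continuity statement: the paper proves it in \Cref{lem:1234,lem:bounds} by introducing $p_{1}(a)$ and $p_{2}(a)$ as the supremum/infimum of the sets where $\mu_{a,p}^{\pm}(p)$ sits on one side of $\alpha$ and $\beta$, using the one-sided continuity and strict monotonicity from \Cref{lem2} to produce a point with $\alpha \prec \mu_{a,p}^{-}(p) \prec \mu_{a,p}^{+}(p) \prec \beta$ (here the hypotheses $T^{-}(q)\neq 1$, $T^{+}(q)\neq 0$ enter), and then pushing $\alpha$ and $\beta$ through $\pi_{a}$ to show $(p_{1}(a), p_{2}(a)) \subseteq (\pi_{a}(\alpha),\pi_{a}(\beta)) \cap (1-a^{-1}, a^{-1})$. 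You acknowledge this as ``the main obstacle'' but supply no argument for it.

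Second --- and this you do not flag at all --- statement (ii) asserts the strict chain for \emph{all} $p$ in the intersection, not merely for $p$ in some non-empty open subinterval. Monotonicity of $p \mapsto \mu_{a,p}^{\pm}(p)$ gives you that the good set of parameters is an interval and gives the inclusion $(p_{1}(a), p_{2}(a)) \subseteq [\pi_{a}(\alpha), \pi_{a}(\beta)]$, but it does not give the reverse inclusion that you need. The paper bridges this with a separate dynamical argument: once $\Omega_{q}^{\pm} \subset \Omega_{a,p}^{\pm}$ holds for one good $p$, one has $\pi_{a}(\Omega_{q}^{+} \cup \Omega_{q}^{-}) \subseteq [0, \pi_{a}(\alpha)] \cup [\pi_{a}(\beta), 1]$ by \Cref{lem:bounds1.0}; this set is forward invariant (\Cref{lem:shift,prop:code-map}) and, crucially, $U_{a,p}^{\pm}$ and $U_{a,p'}^{\pm}$ coincide on it for every $p'$ in the gap $(\pi_{a}(\alpha), \pi_{a}(\beta))$, so $\mu_{a,p'}^{-}(\pi_{a}(\alpha)) = \alpha$ and $\mu_{a,p'}^{+}(\pi_{a}(\beta)) = \beta$ independently of $p'$, whence $\alpha \prec \mu_{a,p'}^{-}(p')$ and $\mu_{a,p'}^{+}(p') \prec \beta$ because $\pi_{a}(\alpha) < p' < \pi_{a}(\beta)$. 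Your appeal to monotonicity points in the wrong direction for this step, so without some such invariance argument the implication (i) $\Rightarrow$ (ii) does not close.
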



\begin{theorem}\label{THM:END}
Let $([0,1], T^{\pm})$ denote a Lorenz system with critical point $q$.
\begin{enumerate}
\item If $T^{-}(q) = 1$, then the following are equivalent
\begin{enumerate}
\item $a \in (\exp(h(T)), 2)$.
\item There exists a unique $p \in [1-a^{-1}, a^{-1}]$, given by $p = a^{-1}$, so that $\alpha = \mu_{a, a^{-1}}^{-}(a^{-1}) \prec \mu_{a, a^{-1}}^{+}(a^{-1}) \prec \beta$.
\item There exists a unique $p \in [1-a^{-1}, a^{-1}]$, given by $p = a^{-1}$, so that $\Omega_{q}^{-} \subset \Omega_{a, p}^{-}$ and $\Omega_{q}^{+} \subset \Omega_{a, p}^{+}$.
\end{enumerate}
\item If $T^{+}(q) = 0$, then the following are equivalent
\begin{enumerate}
\item $a \in (\exp(h(T)), 2)$.
\item There exists a unique $p \in [1-a^{-1}, a^{-1}]$, given by $p = 1 - a^{-1}$, so that $\alpha \prec \mu_{a, a^{-1}}^{-}(a^{-1}) \prec \mu_{a, a^{-1}}^{+}(a^{-1}) = \beta$.
\item There exists a unique $p \in [1-a^{-1}, a^{-1}]$, given by $p = 1 - a^{-1}$, so that $\Omega_{q}^{-} \subset \Omega_{a, p}^{-}$ and $\Omega_{q}^{+} \subset \Omega_{a, p}^{+}$.
\end{enumerate}
\end{enumerate}
\end{theorem} 

\begin{remark}
In \Cref{Main_THM} it is necessary to take the intersection of the intervals $(\pi_{a}(\alpha),\pi_{a}(\beta))$ and $(1-a^{-1}, a^{-1})$ instead of only the interval $(\pi_{a}(\alpha),\pi_{a}(\beta))$. Otherwise the inequality $\pi_{a}(\alpha) < 1- a^{-1}$ or $\pi_{a}(\beta) > a^{-1}$ may occur, and so, the corresponding uniform Lorenz system will not be well defined; see \Cref{EXMP:inequalities}.
\end{remark}

\begin{remark}
For each $a > \exp(h(T))$, \Cref{Main_THM,THM:END} fully classify the points $p$ belonging to the interval $[1-a^{-1}, a^{-1}]$, such that either $\tau_{q}^{-}(q) \preceq \mu_{a, p}^{-}(p) \prec \mu_{a, p}^{+}(p) \prec \tau_{q}^{+}(q)$ or $\tau_{q}^{-}(q) \prec \mu_{a, p}^{-}(p) \prec \mu_{a, p}^{+}(p) \preceq \tau_{q}^{+}(q)$ hold, which, as we will see, implies  an embedding of address spaces, or formally, $\Omega_{q}^{-} \subset \Omega_{a, p}^{-}$ and $\Omega_{q}^{+} \subset \Omega_{a, p}^{+}$.  
\end{remark}

In the final section of this paper we present a new algorithm, based on \Cref{Main_THM,THM:END}, which calculates the topological entropy of a Lorenz map.  The main idea behind the algorithm is the following.  The algorithm first uses an efficient method to calculate the address spaces of a given Lorenz system $([0, 1], T)$.  Then, in a systematic way, it compares the address spaces of $([0, 1], T)$ to the address spaces of a subclass of the family of uniform Lorenz systems.  By a well-known result of Parry \cite{Par} the topological entropy of each member of this subclass of systems is known.  Using \Cref{Main_THM,THM:END} the algorithm is then able to obtain an estimate of the topological entropy of the given system.

\subsection{Outline}

 \Cref{sec:pre} contains  necessary preliminaries.  The concepts of topological entropy and topological (semi-) conjugacy are introduced in \Cref{Section2};  properties of itinerary maps are presented in \Cref{Sec:Kneading}; and several required auxiliary results are proved in \Cref{Sec:Prelim}. \Cref{Sec:Proof_of_Main_THM} contains the proofs of \Cref{Main_THM,THM:END}.  We conclude with \Cref{Sec:Algorithm}, where the statement and a proof of validity of a new algorithm for computing the topological entropy of a Lorenz (dynamical) system is given.

\section{Preliminaries}\label{sec:pre}

In this section, various auxiliary results are proved in preparation for the proof of \Cref{Main_THM,THM:END}.

\subsection{Entropy and topological conjugacy}\label{Section2}

Recall the definition of topological entropy and topological (semi-) conjugacy.

\begin{definition}\label{defEntropy}
Let $T^{\pm}$ be a Lorenz map with critical point $q$.  For $\omega \in \Omega$, the string consisting of the first $n \in \mathbb{N}$ symbols of $\omega$ is denoted by $\omega\vert_{n}$ and $\omega\vert_{0}$ denotes the empty word.  We set $\Omega_{q, n}^{\pm} \coloneqq \{ \omega\vert_{n} : \omega \in \Omega_{q}^{\pm} \}$ and let $\lvert \Omega_{q, n}^{\pm} \rvert$ denote the cardinality of the set $\Omega_{q, n}^{\pm}$, for each $n \in \mathbb{N}$.  The \textit{topological entropy} $h(T^{\pm})$ of $([0, 1], T^{\pm})$ is defined by $\displaystyle h(T^{\pm}) \coloneqq \lim_{n \to \infty}  \ln ( \lvert \Omega_{q, n}^{\pm} \rvert^{1/n} )$.
\end{definition}

\begin{remark}\label{RMK:+=-}
It is well-known  that $h(T^{+}) = h(T^{-}) \leq \ln(2)$.  Thus, for ease of notation,  we denote the common value $h(T^{+}) = h(T^{-})$ by $h(T)$ .
\end{remark}

\begin{theorem}\cite{Par,HR}\label{thm:uniform}
If $(a, p)$ is an admissible pair, then $h\left(U^{+}_{a, p}\right) = h\left(U^{-}_{a, p}\right)$.  Moreover, this common value is equal to $\ln (a)$.
\end{theorem}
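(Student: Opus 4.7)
The first assertion, $h(U^{+}_{a,p}) = h(U^{-}_{a,p})$, is already contained in \Cref{RMK:+=-}, so the task reduces to showing that this common value equals $\ln a$. The plan is to sandwich the exponential growth rate of $|\Omega^{\pm}_{a,p,n}|$ between two matching bounds.

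For the lower bound, I would exploit the piecewise-linear structure of $U^{\pm}_{a,p}$. Each $n$-fold iterate $(U^{\pm}_{a,p})^n$ is piecewise linear with slope $a^n$ on every maximal monotonicity interval $I$; since $(U^{\pm}_{a,p})^n(I) \subseteq [0,1]$, this forces $|I| \leq a^{-n}$. Distinct monotonicity intervals are in bijection with distinct length-$n$ itinerary prefixes (crossing such a boundary flips exactly one symbol of the prefix), so $|\Omega^{\pm}_{a,p,n}|$ equals their number. As these intervals partition $[0,1]$ modulo endpoints, combining $\sum_{k} |I_{k}| = 1$ with $|I_k| \leq a^{-n}$ yields $|\Omega^{\pm}_{a,p,n}| \geq a^{n}$, and hence $\liminf_{n} \frac{1}{n}\ln |\Omega^{\pm}_{a,p,n}| \geq \ln a$.

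For the upper bound, the cleanest route is via the variational principle. Every $U^{\pm}_{a,p}$-invariant probability measure $\mu$ satisfies $\int \ln|T'| \, d\mu = \ln a$, since $|T'| \equiv a$ off the single point $\{p\}$, and so Ruelle's inequality gives $h_{\mu}(U^{\pm}_{a,p}) \leq \ln a$; taking the supremum over such $\mu$ produces $h(U^{\pm}_{a,p}) \leq \ln a$. A self-contained symbolic alternative would be to describe $\Omega^{\pm}_{a,p}$ explicitly in terms of the kneading sequences $\mu^{\pm}_{a,p}(p)$ and count admissible length-$n$ words through a lap-number recursion in the style of Parry, which is essentially what \cite{Par,HR} carry out in detail.

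The principal obstacle is reconciling the combinatorial definition of entropy in \Cref{defEntropy} (the growth of $|\Omega^{\pm}_{q,n}|$) with the classical topological entropy that underlies Ruelle's inequality and the variational principle; this equivalence for piecewise monotone interval maps is the Misiurewicz--Szlenk theorem, and the jump discontinuity at $p$ warrants some additional care. This is presumably why the authors prefer simply to invoke \cite{Par,HR}, where the purely combinatorial computation has already been carried out for the relevant class of uniform maps.
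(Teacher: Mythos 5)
The paper offers no proof of \Cref{thm:uniform} at all: it is quoted verbatim from Parry \cite{Par} and Hofbauer--Raith \cite{HR}, so there is no argument of the authors' to compare yours against. Judged on its own terms, your lower bound is correct and complete: the nonempty length-$n$ cylinders are intervals that partition $[0,1]$ up to endpoints, each is mapped by $(U^{\pm}_{a,p})^{n}$ affinely with slope $a^{n}$ into $[0,1]$ and so has length at most $a^{-n}$, and they are in bijection with the elements of $\Omega^{\pm}_{a,p,n}$; hence $\lvert\Omega^{\pm}_{a,p,n}\rvert\geq a^{n}$. This is a clean, elementary half of the theorem.

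The upper bound, however, is where all the content of the theorem lives, and your proposal does not close it. Ruelle's inequality is a statement about $C^{1}$ maps (and its proof uses differentiability in an essential way); $U^{\pm}_{a,p}$ is discontinuous at $p$, so you cannot invoke it off the shelf. What you actually need is the inequality $h_{\mu}(T)\leq\int\ln\lvert T'\rvert\,d\mu$ for piecewise monotone expanding interval maps together with the variational principle for such maps, \emph{and} the identification of the word-counting entropy of \Cref{defEntropy} with the classical topological entropy (Misiurewicz--Szlenk). You name this last point as "the principal obstacle" but then leave it, and the entire chain, unverified; the "self-contained symbolic alternative" is described only as "essentially what \cite{Par,HR} carry out," which is a citation, not a proof. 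Note also that the naive counting bounds fail here: the images $(U^{\pm}_{a,p})^{n}(I)$ of distinct cylinders $I$ overlap, so one cannot conclude $\lvert\Omega^{\pm}_{a,p,n}\rvert\leq C a^{n}$ from $\sum_{I}\lvert (U^{\pm}_{a,p})^{n}(I)\rvert = a^{n}$ without controlling the multiplicity of the overlaps -- this is precisely the Hofbauer-tower/kneading analysis that the references supply. So the proposal is a sound outline with one rigorous half, but as a proof it has a genuine gap in the direction $h(U^{\pm}_{a,p})\leq\ln a$.
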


\begin{definition}\label{Defn:conj}
Two maps $R: X \circlearrowleft$ and $S: Y \circlearrowleft$ defined on compact metric spaces are called \textit{topologically conjugate} if there exists a homeomorphism $\hbar: X \to Y$ such that $S \circ \hbar = \hbar \circ R$.  If $\hbar$ is continuous and surjective then $R$ and $S$ are called  \textit{semi-conjugate}.
\end{definition}

When we write, `two dynamical systems are topologically (semi-) conjugate', we mean that the associated maps are topologically (semi-) conjugate.  

\begin{lemma}[\cite{G}]\label{lemma1}
\leavevmode
\begin{enumerate}
\item If two Lorenz systems $([0,1], T^{\pm})$ and $([0,1], R^{\pm})$ are topologically conjugate, then the address spaces are equal and hence, $h(T) = h(R)$. 
\item If a Lorenz system $([0,1], T^{\pm})$ with critical point $q$ is semi-conjugate to a Lorenz system $([0,1], R^{\pm})$ with critical point $p$, then $\Omega_{p}^{\pm}\subseteq \Omega_{p}^{\pm}$ and $h(T) = h(R)$.
\end{enumerate}
\end{lemma}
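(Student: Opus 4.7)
The plan is to extract from the (semi-)conjugating map $\hbar$ two key facts, (a) $\hbar$ is monotone non-decreasing with $\hbar(0)=0$ and $\hbar(1)=1$, and (b) $\hbar(q)=p$, and then compare itineraries symbol-by-symbol.

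For part (i), let $\hbar:[0,1]\to[0,1]$ be the conjugating homeomorphism, so $R^{\pm}\circ\hbar=\hbar\circ T^{\pm}$. As a homeomorphism of an interval, $\hbar$ is strictly monotone and sends $\{0,1\}$ to $\{0,1\}$; I would rule out the orientation-reversing case by noting that $\hbar(0)$ and $\hbar(1)$ must be fixed points of $R^{\pm}$ (since $R^{\pm}(\hbar(0))=\hbar(T^{\pm}(0))=\hbar(0)$, and similarly for $1$), and using the expansion constant $s>1$ from \Cref{Defn:Lorenz_map}(ii) to localize these fixed points to the correct endpoints, so $\hbar(0)=0$ and $\hbar(1)=1$. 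To show $\hbar(q)=p$ I would exploit the jump discontinuity of $T^{+}$ at $q$: the one-sided values $\lim_{x\uparrow q}T^{+}(x)=f_{0}(q)$ and $T^{+}(q)=f_{1}(q)$ are distinct by \Cref{Defn:Lorenz_map}(i), and continuity of $\hbar$ together with the intertwining relation forces the one-sided values of $R^{+}$ at $\hbar(q)$ to be distinct as well, which by the definition of a Lorenz map happens only at the critical point $p$. With (a) and (b) established, the equivalences $T^{k}(x)\geq q\iff R^{k}(\hbar(x))\geq p$ yield $\tau_{q}^{\pm}(x)=\tau_{p}^{\pm}(\hbar(x))$ for every $x\in[0,1]$, so $\Omega_{q}^{\pm}=\tau_{q}^{\pm}([0,1])=\tau_{p}^{\pm}(\hbar([0,1]))=\Omega_{p}^{\pm}$, and equality of entropies is immediate from \Cref{defEntropy}.

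For part (ii), which I read as asserting $\Omega_{p}^{\pm}\subseteq\Omega_{q}^{\pm}$ (the displayed "$\Omega_{p}^{\pm}\subseteq\Omega_{p}^{\pm}$" appears to be a typo), the conjugating map $\hbar$ is only continuous and surjective, hence monotone non-decreasing with possibly nontrivial level sets. The endpoint and critical-point arguments go through verbatim to give (a) and (b). Monotonicity of $\hbar$ with $\hbar(q)=p$ then forces $\tau_{p}^{\pm}(\hbar(x))=\tau_{q}^{\pm}(x)$, and surjectivity of $\hbar$ yields $\Omega_{p}^{\pm}=\tau_{p}^{\pm}(\hbar([0,1]))\subseteq\Omega_{q}^{\pm}$, which in turn gives $h(R)\leq h(T)$ via \Cref{defEntropy}. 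For the reverse inequality I would appeal to the expanding hypothesis in \Cref{Defn:Lorenz_map}(ii): a nondegenerate level set $[u,v]$ of $\hbar$ would satisfy $T^{n}([u,v])$ contained in a single level set of $\hbar$ for all $n$, but expansion by factor $s>1$ on each monotone branch forces such sets to have negligible dynamical complexity, so $\hbar$ cannot strictly decrease the entropy.

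The main obstacle I anticipate is the rigorous proof that $\hbar(q)=p$, since the only available inputs are continuity of $\hbar$ (not of $T^{\pm}$ or $R^{\pm}$) together with the intertwining identity; one must carefully track one-sided limits through the discontinuity using the strict inequality $f_{0}(q)>f_{1}(q)$ from \Cref{Defn:Lorenz_map}(i), and in the semi-conjugate case also exclude the possibility that $\hbar$ is constant on an interval straddling $q$. A secondary difficulty in part (ii) is the reverse entropy inequality, for which the expansion-based argument sketched above, or alternatively an appeal to a classical entropy-preservation result for factor maps between piecewise expanding interval systems, will be required.
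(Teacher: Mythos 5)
The paper offers no proof of this lemma at all --- it is imported wholesale from Glendinning \cite{G} --- so your argument has to stand on its own. Part (i) is essentially sound: the fixed-point set of a Lorenz map is exactly $\{0,1\}$ (by the expansion condition $s>1$), so $\hbar$ permutes the endpoints, and the jump at the critical point is the right tool for showing $\hbar(q)=p$. One detail: your proposed route for excluding orientation reversal (``using the expansion constant to localize these fixed points to the correct endpoints'') does not work as stated, since $0$ and $1$ are one-sided repelling fixed points of the same type and an orientation-reversing conjugation is perfectly compatible with the fixed-point data. What does work is the discontinuity argument you give next: if $\hbar$ were decreasing, then $R^{+}=\hbar\circ T^{+}\circ\hbar^{-1}$ would fail to be right-continuous at $\hbar(q)$, whereas every upper Lorenz map is right-continuous everywhere; so $\hbar$ is increasing, $\hbar(q)=p$ because $p$ is the unique discontinuity of $R^{+}$, and the itinerary identity, equality of address spaces and $h(T)=h(R)$ follow.

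Part (ii) has genuine gaps. First, a continuous surjection of $[0,1]$ is \emph{not} automatically monotone (think of a tent map); monotonicity of the semi-conjugacy is an additional property that must be assumed or established --- it holds for the semi-conjugacy constructed in \cite{G}, which is the one the paper actually invokes in the proof of \Cref{Main_THM}, but it does not follow from \Cref{Defn:conj}. Second, even granting monotonicity, your identity $\tau_{p}^{\pm}(\hbar(x))=\tau_{q}^{\pm}(x)$ is false in general: from $T^{k}(x)<q$ one only gets $\hbar(T^{k}(x))\leq p$, not $<p$, so the symbols can disagree whenever the orbit of $x$ enters the level set $\hbar^{-1}(p)$; the inclusion $\Omega_{p}^{\pm}\subseteq\Omega_{q}^{\pm}$ therefore requires a boundary analysis (most cleanly via \Cref{thm:Main1.5} and the critical itineraries) rather than a pointwise identity. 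Third, and most seriously, the reverse entropy inequality $h(T)\leq h(R)$ is the substantive content of the lemma and is not proved: the assertion that level sets of $\hbar$ ``have negligible dynamical complexity'' is precisely what needs proof. A level set $J$ only satisfies $T^{n}(J)\subseteq\hbar^{-1}(R^{n}(\hbar(J)))$; once some iterate straddles $q$ it splits, and a priori the number of itinerary words realized inside collapsed intervals could grow exponentially. Controlling this is essentially the content of Parry's and Glendinning's theorems, so as written the argument is circular at this point. Either cite \cite{G} for part (ii), as the paper does, or restrict the statement to the monotone semi-conjugacy onto the $\beta$-transformation with $\beta=\exp(h(T))$, where entropy preservation is built into the construction.
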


\subsection{Properties of itinerary maps}\label{Sec:Kneading}

We next state properties of the itinerary maps  $\mu_{a, p}^{\pm}$ of uniform Lorenz systems.  Throughout this section $(a, p)$ will denote an admissible pair.

\begin{lemma}[\cite{BHV}]\label{lem2}
\leavevmode
\begin{enumerate}
\item The map $[0, 1] \ni x \mapsto \mu_{a, p}^{+}(x)$ is strictly increasing and right-continuous. Moreover, for all $x \in (0, 1)$, we have that $\displaystyle \mu_{a, p}^{-}(x) = \lim_{\epsilon \searrow 0} \mu_{a, p}^{+}(x - \epsilon)$.
\item The map $[0, 1] \ni x \mapsto \mu_{a, p}^{-}(x)$ is strictly increasing and left-continuous.  Moreover, for all $x \in (0, 1)$, we have that $\displaystyle \mu_{a, p}^{+}(x) = \lim_{\epsilon \searrow 0} \mu_{a, p}^{-}(x + \epsilon)$.
\item The map $p \mapsto \mu_{a, p}^{+}(p)$ is strictly increasing and right-continuous.
\item The map $p \mapsto \mu_{a, p}^{-}(p)$ is strictly increasing and left-continuous.
\end{enumerate}
\end{lemma}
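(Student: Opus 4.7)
Plan: All four parts rest on the same underlying structure: $U^{\pm}_{a,p}$ is piecewise affine with slope $a > 1$ on each of two branches separated by the critical point $p$. I would first prove (i) and (ii), then (iii) and (iv), exploiting the fact that iterating an orbit under a prescribed branch sequence is an \emph{affine} function of the starting point (for (i), (ii)) or of the critical point (for (iii), (iv)).

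For (i), strict monotonicity of $x \mapsto \mu^+_{a,p}(x)$ is the standard expansion argument: if $x < y$ shared the first $n$ itinerary symbols under $U^+_{a,p}$, their orbits would stay in the same branches and satisfy $(U^+_{a,p})^n(y) - (U^+_{a,p})^n(x) = a^n(y - x)$, which must remain in $[0,1]$ and so cannot hold for large $n$. Hence the itineraries differ at some first index $n$, and monotone iteration forces $(U^+_{a,p})^n(x) < p \leq (U^+_{a,p})^n(y)$, giving $\mu^+_{a,p}(x) \prec \mu^+_{a,p}(y)$. Right-continuity is then inherited from the right-closed structure of the $1$-branch $[p,1]$ and the positive gap from $p$ on the $0$-branch $[0,p)$: for $y \in [x, x + \delta)$ with $\delta$ chosen small (scaled by the factors $a^{-n}$), each of the first $N$ iterates of $y$ lies in the same branch as the corresponding iterate of $x$. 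The identity $\mu^-_{a,p}(x) = \lim_{\epsilon \searrow 0} \mu^+_{a,p}(x - \epsilon)$ follows by noting that $\mu^+_{a,p}$ and $\mu^-_{a,p}$ coincide except at orbits that eventually hit $p$, at which the left limit of $\mu^+_{a,p}$ is computed directly and agrees with $\mu^-_{a,p}$. Part (ii) follows by interchanging the roles of $<$ and $\leq$.

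For (iii), the key observation is that if a branch sequence $\omega_0, \ldots, \omega_{n-1}$ is fixed, then the iterate $(U^+_{a,p})^n(p)$ is affine in $p$ with slope $a^n$:
\begin{equation*}
(U^+_{a,p})^n(p) = a^n p + (1-a) \sum_{k=0}^{n-1} \omega_k \, a^{n-1-k}.
\end{equation*}
Hence the condition $(U^+_{a,p})^n(p) \geq p$ that would force $\omega_n = 1$ is equivalent to $p \geq \rho_n$ for a threshold $\rho_n$ depending only on $\omega\vert_n$. Strict monotonicity then follows: given $p < p'$, if $\omega \mathrel{:=} \mu^+_{a,p}(p)$ and $\omega' \mathrel{:=} \mu^+_{a,p'}(p')$ first differ at index $n$, the shared threshold $\rho_n$ rules out $\omega_n = 1$ with $\omega'_n = 0$ (that would demand $p \geq \rho_n > p'$), so $\omega_n = 0$ and $\omega'_n = 1$, whence $\omega \prec \omega'$. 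That $\omega \neq \omega'$ at all follows from the identity $\pi_a(\mu^+_{a,p}(p)) = p$, which is an immediate consequence of iterating the inverse branches $g_{0,a}$, $g_{1,a}$ appearing in the definition \eqref{EQ:Projection_Map}. Right-continuity of $p \mapsto \mu^+_{a,p}(p)$ uses the same threshold analysis: the strict inequalities $p < \rho_n$ (when $\omega_n = 0$) persist for $p'$ slightly larger than $p$, while the inequality $p \geq \rho_n$ (when $\omega_n = 1$) is preserved under any increase in $p$, so the first $N$ symbols stabilize for $p' \in [p, p + \delta)$ with $\delta$ small. Part (iv) is symmetric under the $-$ convention and gives left-continuity.

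The main obstacle is the careful case analysis at orbits that eventually hit the critical point $p$, where the $+$ and $-$ itineraries genuinely differ and where the limit identities in (i), (ii) and the continuity claims in (iii), (iv) are most delicate. The affine-function identity in the previous paragraph packages this bookkeeping cleanly for (iii), (iv), but in (i), (ii) the critical-orbit cases have to be handled one iterate at a time.
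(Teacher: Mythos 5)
The paper does not actually prove this lemma: it is quoted verbatim from \cite{BHV} (Barnsley--Harding--Vince) and used as a black box, so there is no in-paper argument to compare yours against. Judged on its own, your proposal is correct and is essentially the standard proof one would expect to find in the cited source. The expansivity argument for strict monotonicity in (i)--(ii) (two points sharing all itinerary symbols would have iterates separating at rate $a^{n}$ inside $[0,1]$) is exactly right, as is the observation that the asymmetric branch conditions ($<p$ versus $\geq p$) make $\mu_{a,p}^{+}$ right-continuous and $\mu_{a,p}^{-}$ left-continuous. Your affine identity $(U_{a,p}^{+})^{n}(p)=a^{n}p+(1-a)\sum_{k=0}^{n-1}\omega_{k}a^{n-1-k}$ and the resulting thresholds $\rho_{n}$ depending only on $\omega\vert_{n}$ are a clean and fully explicit way to handle (iii)--(iv), where the dependence on $p$ enters both as the initial point and as the critical point; the threshold comparison correctly forces $\omega_{n}=0$, $\omega_{n}'=1$ at the first index of disagreement, and your appeal to $\pi_{a}\circ\mu_{a,p}^{+}(p)=p$ to guarantee disagreement is legitimate and non-circular (that identity is proved independently of this lemma). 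The only thin spot is the limit identities $\mu_{a,p}^{-}(x)=\lim_{\epsilon\searrow0}\mu_{a,p}^{+}(x-\epsilon)$ and its mirror: at points whose orbit hits $p$, the $U^{+}$- and $U^{-}$-orbits genuinely diverge after the hitting time, and one must verify inductively that the $U^{+}$-orbit of $x-\epsilon$ shadows the $U^{-}$-orbit of $x$ from below so that the symbols converge one index at a time; you flag exactly this as the delicate case, so I regard it as an acknowledged detail rather than a gap. An alternative way to close that step, if you want to avoid the iterate-by-iterate bookkeeping, is to note that the left limit exists by monotonicity and compactness of $\Omega$ and then identify it via the coding map $\pi_{a}$.
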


Finally, we conclude with the a result which links the coding map $\pi_{a}$, defined in \eqref{EQ:Projection_Map}, and the itinerary maps $\mu_{a, p}^{\pm}$.  This requires the following definition.

\begin{definition}\label{DEFN:SHIFT_INV}
The continuous map $S: \Omega \circlearrowleft$ defined by $S(\omega_{0} \, \omega_{1} \, \omega_{2} \cdots) \coloneqq \omega_{1} \, \omega_{2} \, \omega_{3} \cdots,$ is called the \textit{shift map} and a subset $\Lambda$ of $\Omega$ is called \textit{forward shift sub-invariant} if $S(\Lambda) \subseteq \Lambda$.
\end{definition}

\begin{proposition}\label{prop:code-map}
We have that  $\pi_{a} \left ( \mu_{a, p}^{\pm}(x)\right ) = x$, for all $x \in [0,1]$, and that the following diagram commutes
\[
\begin{array}
[c]{ccc}
\Omega_{a, p}^{\pm} & \overset{S}{\longrightarrow} & \Omega_{a, p}^{\pm}\\
& & \\
\pi_a \downarrow\text{\ \ \ \ } &  & \text{ \ \ \ }\downarrow\pi_a \\ & & \\
 \lbrack 0,1 \rbrack & \underset{U_{a, p}^{\pm}} {\longrightarrow} & \lbrack 0,1\rbrack.
\end{array}
\]
\end{proposition}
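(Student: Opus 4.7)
My plan is to prove the identity $\pi_{a}(\mu_{a,p}^{\pm}(x)) = x$ first, and then obtain commutativity of the diagram as an almost immediate corollary of it together with the defining property of the itinerary map.

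The crux of the first part is the following one-step ``inversion'' identity: for every $x \in [0,1]$, setting $\omega \mathrel{:=} \mu_{a,p}^{\pm}(x)$, one has $x = g_{\omega_{0}, a}\bigl(U_{a,p}^{\pm}(x)\bigr)$. I would verify this by a short case split on $\omega_{0}$: the maps $g_{0,a}$ and $g_{1,a}$ are precisely the two inverse branches of the piecewise linear map $U_{a,p}^{\pm}$, and the digit $\omega_{0}$ records which branch $x$ lies on, with the strict/non-strict inequality at the critical point $p$ matched exactly by the $+$ and $-$ conventions of \Cref{Defn:Itinerary_map1}. Iterating this identity gives, for every $n \in \mathbb{N}_{0}$,
\begin{equation*}
x = g_{\omega_{0},a} \circ g_{\omega_{1},a} \circ \cdots \circ g_{\omega_{n},a}\bigl((U_{a,p}^{\pm})^{n+1}(x)\bigr).
\end{equation*}
Since each $g_{i,a}$ is a strict contraction with Lipschitz constant $1/a$, the $(n{+}1)$-fold composition maps $[0,1]$ into an interval of diameter at most $a^{-(n+1)}$. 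Replacing the argument $(U_{a,p}^{\pm})^{n+1}(x) \in [0,1]$ by $1 \in [0,1]$ therefore costs at most $a^{-(n+1)}$; letting $n \to \infty$ and comparing with the defining formula \eqref{EQ:Projection_Map} for $\pi_{a}$ produces $\pi_{a}(\omega) = x$.

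For the commutativity of the diagram, I take any $\omega \in \Omega_{a,p}^{\pm}$ and, using the definition of $\Omega_{a,p}^{\pm}$, write $\omega = \mu_{a,p}^{\pm}(x)$ for some $x \in [0,1]$. Directly from \Cref{Defn:Itinerary_map1}, advancing the orbit by one step corresponds to shifting the itinerary, i.e.\ $S\bigl(\mu_{a,p}^{\pm}(x)\bigr) = \mu_{a,p}^{\pm}\bigl(U_{a,p}^{\pm}(x)\bigr)$. Applying the first part of the proposition to both sides then gives $\pi_{a}(S(\omega)) = U_{a,p}^{\pm}(x) = U_{a,p}^{\pm}(\pi_{a}(\omega))$, which is the desired commutativity.

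The only place requiring genuine attention is verifying the one-step identity $x = g_{\omega_{0},a}(U_{a,p}^{\pm}(x))$ cleanly in both the upper and lower cases; once that is in place, the contraction estimate forces the limit and the rest of the argument is essentially formal. I do not anticipate any real obstacle beyond bookkeeping at the critical point.
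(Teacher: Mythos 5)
Your proof is correct and is exactly the argument the paper has in mind: the paper's own ``proof'' simply declares the result readily verifiable from the definitions (deferring to sketches in \cite{BHV} and \cite{GH}), and your one-step inversion identity $x = g_{\omega_{0},a}(U_{a,p}^{\pm}(x))$, the contraction estimate, and the shift relation $S\circ\mu_{a,p}^{\pm} = \mu_{a,p}^{\pm}\circ U_{a,p}^{\pm}$ are precisely the details being left to the reader. The bookkeeping at the critical point checks out in both the $+$ and $-$ cases, so nothing is missing.
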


\begin{proof}
The result is readily verifiable from the definitions of the maps involved.   Also a sketch of the proof of the result appears in \cite[Section~5]{BHV} and \cite[Section~2.2]{GH}.
\end{proof}

\subsection{Auxiliary results}\label{Sec:Prelim}

In the following auxiliary results used in the proofs of \Cref{Main_THM,THM:END}, let $([0,1], T^{\pm})$ denote a Lorenz system with critical point $q$, let $\tau_{q}^{\pm}$ denote the associated itinerary map, and let $\Omega_{q}^{\pm}$ denote the associated address space.

\begin{lemma}\label{lem:shift}
The address space $\Omega_{q}^{\pm}$ is forward shift sub-invariant.
\end{lemma}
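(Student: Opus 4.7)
The plan is to show directly from the definitions that if $\omega \in \Omega_{q}^{\pm}$, then $S(\omega) \in \Omega_{q}^{\pm}$, by exhibiting an explicit point in $[0,1]$ whose itinerary equals $S(\omega)$. The natural candidate is the image of the original seed point under $T^{\pm}$.

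More precisely, take any $\omega \in \Omega_{q}^{\pm}$. By \Cref{Defn:Itinerary_map3}, there exists $x \in [0,1]$ with $\omega = \tau_{q}^{\pm}(x)$. Since $T^{\pm}$ maps $[0,1]$ into itself, the point $y \mathrel{:=} T^{\pm}(x)$ again lies in $[0,1]$, so $\tau_{q}^{\pm}(y)$ is well-defined and belongs to $\Omega_{q}^{\pm}$. I would then verify symbol-by-symbol that $S(\omega) = \tau_{q}^{\pm}(y)$. For the ``$+$'' case, this reduces to noting that for every $k \in \mathbb{N}_{0}$,
\begin{equation*}
S(\omega)_{k} = \omega_{k+1} =
\begin{cases}
0 & \text{if } (T^{+})^{k+1}(x) < q,\\
1 & \text{if } (T^{+})^{k+1}(x) \geq q,
\end{cases}
\end{equation*}
and then observing that $(T^{+})^{k+1}(x) = (T^{+})^{k}(y)$, so this is exactly the $k$-th symbol of $\tau_{q}^{+}(y)$. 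The ``$-$'' case is identical with the strict and non-strict inequalities swapped.

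There is no real obstacle here; the lemma is essentially a tautological consequence of the semiconjugacy between the shift and $T^{\pm}$ encoded by the itinerary map (the same compatibility that is formalised in \Cref{prop:code-map} for uniform systems). The only point worth flagging is the trivial but necessary observation that $T^{\pm}([0,1]) \subseteq [0,1]$, which ensures that $y$ is an admissible seed for the itinerary map and hence that $\tau_{q}^{\pm}(y) \in \Omega_{q}^{\pm}$.
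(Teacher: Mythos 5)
Your proof is correct and rests on the same observation as the paper's, namely the shift-equivariance of the itinerary map, $S\circ\tau_{q}^{\pm}=\tau_{q}^{\pm}\circ T^{\pm}$, verified symbol by symbol. The paper simply cites \Cref{prop:code-map} (which formalises this commuting diagram for uniform systems); your direct verification for a general Lorenz map is, if anything, the more complete version of the same argument.
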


\begin{proof}
This is a direct consequence of \Cref{prop:code-map}.
\end{proof}

A partial version of the following result can be found in \cite[Lemma~1]{H}. However, to the best of our knowledge, Theorem~\ref{thm:Main1.5} first appeared in \cite[Theorem~1]{HS}. 

\begin{definition}
The strings $\alpha \coloneqq \tau_{q}^{-}(q)$ and $\beta \coloneqq \tau_{q}^{+}(q)$ are called the \textit{critical itineraries}.  
\end{definition}

\begin{theorem}\label{thm:Main1.5}
The spaces $\Omega_{q}^{+}$ and $\Omega_{q}^{-}$ are uniquely determined by $\alpha$ and $\beta$ as follows:
\begin{align*}
\Omega_{q}^{+}  &= \{ \omega \in \Omega : S^{n}(\omega) \prec \alpha \; \text{or} \; \beta \preceq S^{n}(\omega), \; \text{for all} \; n \in \mathbb{N}_{0} \},\\
\Omega_{q}^{-}  &= \{ \omega \in \Omega :  S^{n} (\omega) \preceq \alpha \; \text{or} \; \beta \prec S^{n}(\omega), \, \; \text{for all} \; n \in \mathbb{N}_{0} \}.
\end{align*}
\end{theorem}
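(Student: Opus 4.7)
The plan is to prove both equalities in tandem, treating the $+$ and $-$ cases as duals. Writing $A^{+}$ and $A^{-}$ for the right-hand sides, the heart of the argument lies in the following order-theoretic properties of $\tau_q^{\pm}$: both maps are strictly increasing in the lexicographic order, $\tau_q^{+}$ is right-continuous while $\tau_q^{-}$ is left-continuous, and at the critical point we have $\tau_q^+(q) = \beta$, $\tau_q^-(q) = \alpha$, with $\lim_{y \to q^-} \tau_q^+(y) = \alpha$ and $\lim_{y \to q^+} \tau_q^-(y) = \beta$. Strict monotonicity comes from condition (ii) of \Cref{Defn:Lorenz_map}: for $y_{1} < y_{2}$, expansion by the factor $s > 1$ forces $|(T^{\pm})^{n}(y_{1}) - (T^{\pm})^{n}(y_{2})|$ to grow geometrically while the orbits share a branch, so they must eventually land on opposite sides of $q$. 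Packaging these facts together yields the dichotomy underlying the theorem: for every $y \in [0, 1]$, either $\tau_q^+(y) \prec \alpha$ (when $y < q$) or $\tau_q^+(y) \succeq \beta$ (when $y \geq q$), and dually either $\tau_q^-(y) \preceq \alpha$ (when $y \leq q$) or $\tau_q^-(y) \succ \beta$ (when $y > q$).

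The inclusions $\Omega_q^{\pm} \subseteq A^{\pm}$ then follow immediately. Given $\omega = \tau_q^{+}(x) \in \Omega_q^{+}$ and $n \in \mathbb{N}_0$, a direct calculation from the definitions gives $S^{n}(\omega) = \tau_q^{+}((T^{+})^{n}(x))$, and the dichotomy above applied at $y := (T^{+})^{n}(x)$ produces exactly one of the two admissibility alternatives; the case of $\Omega_q^{-}$ is dual. For the reverse inclusion $A^{\pm} \subseteq \Omega_q^{\pm}$, I would construct, for an admissible $\omega \in A^+$, a point $x \in [0,1]$ with $\tau_q^{+}(x) = \omega$ via nested intervals. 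Setting $J_n := \{y \in [0,1] : \tau_q^{+}(y)_i = \omega_i \text{ for all } 0 \leq i \leq n\}$, the strict monotonicity of $\tau_q^{+}$ makes each $J_n$ a subinterval of $[0, 1]$, and the expansion property forces $\operatorname{diam}(J_n) \to 0$; once each $J_n$ is non-empty, the intersection $\bigcap_n \overline{J_n}$ is a singleton $\{x\}$ satisfying $\tau_q^{+}(x) = \omega$ by construction.

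The main obstacle is precisely the non-emptiness of the nested intervals $J_n$, which I would prove by induction on $n$. The admissibility condition $S^{n}(\omega) \prec \alpha$ or $S^{n}(\omega) \succeq \beta$ enters decisively here: if $S^n(\omega)$ were to fall in the ``forbidden band'' strictly between $\alpha$ and $\beta$, the corresponding piece of phase space would sit in the gap produced by the discontinuity of $T^{+}$ at $q$ and could not be the image of a preimage on the correct branch. The hypothesis on $\omega$ exactly excludes this scenario, so at each inductive step the piece of $T^{+}(J_{n-1})$ matching the symbol $\omega_n$ is non-empty and the construction propagates. The slight asymmetry in the strict versus non-strict inequalities between the $+$ and $-$ characterisations reflects the asymmetry in the domains of the two branches of $T^{+}$ versus $T^{-}$ at the critical point $q$.
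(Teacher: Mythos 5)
First, for context: the paper does not actually prove this statement --- it is quoted from Hubbard and Sparrow \cite[Theorem~1]{HS} (with a partial version attributed to \cite[Lemma~1]{H}) --- so there is no internal proof to compare against, and yours is a from-scratch argument for a cited result. Your architecture is the standard one. The forward inclusions $\Omega_q^{\pm}\subseteq A^{\pm}$ are sound as sketched: strict lexicographic monotonicity of $\tau_q^{\pm}$ from the expansion condition (ii) of \Cref{Defn:Lorenz_map}, the identity $S^{n}\circ\tau_q^{+}=\tau_q^{+}\circ (T^{+})^{n}$, and the dichotomy at $q$ coming from $\tau_q^{+}(q)=\beta$ together with $\sup_{y<q}\tau_q^{+}(y)=\tau_q^{-}(q)=\alpha$ (and strictness from strict monotonicity).

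The reverse inclusion, however, has a genuine gap at its last step, beyond the admitted vagueness of the non-emptiness induction. Each $J_n$ contains its left endpoint but need not contain its right endpoint (the symbol-$0$ constraint is the preimage of $[0,q)$), so the singleton $\{x\}=\bigcap_n\overline{J_n}$ need not lie in every $J_n$: it may be the right endpoint of some $J_n$, approached from the left by the left endpoints $c_m\in J_m$. Since $\tau_q^{+}$ is only right-continuous, in that case all you can conclude is $\omega=\lim_{m\to\infty}\tau_q^{+}(c_m)=\lim_{y\nearrow x}\tau_q^{+}(y)=\tau_q^{-}(x)$, which can differ from $\tau_q^{+}(x)$; the claim that $x$ satisfies $\tau_q^{+}(x)=\omega$ ``by construction'' is therefore false as stated. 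Closing this requires invoking the admissibility of $\omega$ a \emph{second} time: if $\tau_q^{-}(x)\prec\tau_q^{+}(x)$, then the two orbits of $x$ under $T^{+}$ and $T^{-}$ first separate at an exact hit of $q$ at some time $k$, whence $S^{k}(\omega)=S^{k}(\tau_q^{-}(x))=\tau_q^{-}(q)=\alpha$, which satisfies neither $S^k(\omega)\prec\alpha$ nor $S^k(\omega)\succeq\beta$ (as $\alpha$ begins with $0$ and $\beta$ with $1$), contradicting $\omega\in A^{+}$. So the theorem is true and your skeleton is essentially Hubbard--Sparrow's, but the hypothesis on $\omega$ is needed in two distinct places in the reverse inclusion --- non-emptiness of the cylinders \emph{and} correctness of the itinerary of the limit point --- and your sketch accounts for only the first. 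The same repair is needed, dually, for $\Omega_q^{-}$.
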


\begin{corollary}\label{cor:cor2}
Let $a \in (1,2)$ be fixed. 
\begin{enumerate}
\item  If there exists $p$ such that $(a, p)$ is admissible and $\alpha \preceq \mu_{a, p}^{-}(p) \prec \mu_{a, p}^{+}(p) \preceq \beta$, then  $h(T) \leq \ln (a)$.  
\item  If there exists $p$ such that $(a, p)$ is admissible and $\mu_{a, p}^{-}(p)  \preceq \alpha  \prec  \beta \preceq \mu_{a, p}^{+}(p)$, then   $h(T) \geq \ln (a)$.
\end{enumerate}
\end{corollary}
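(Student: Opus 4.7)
The plan is to translate the hypotheses on the critical itineraries $\alpha,\beta$ into inclusions of address spaces via Theorem~\ref{thm:Main1.5}, and then read off the entropy inequality from Definition~\ref{defEntropy} combined with Theorem~\ref{thm:uniform}. Throughout, I write $\alpha' \mathrel{:=} \mu_{a,p}^{-}(p)$ and $\beta' \mathrel{:=} \mu_{a,p}^{+}(p)$; these are the critical itineraries of the uniform Lorenz system $U_{a,p}^{\pm}$, so Theorem~\ref{thm:Main1.5} characterises $\Omega_{a,p}^{+}$ and $\Omega_{a,p}^{-}$ in terms of $\alpha'$ and $\beta'$ in exactly the way it characterises $\Omega_{q}^{\pm}$ in terms of $\alpha$ and $\beta$.

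For part (i), the hypothesis reads $\alpha \preceq \alpha' \prec \beta' \preceq \beta$. Given $\omega \in \Omega_{q}^{+}$, Theorem~\ref{thm:Main1.5} asserts that for each $n \in \mathbb{N}_{0}$ either $S^{n}(\omega) \prec \alpha \preceq \alpha'$ or $\beta' \preceq \beta \preceq S^{n}(\omega)$; in either case $\omega \in \Omega_{a,p}^{+}$. An analogous chain, with $\preceq \alpha$ and $\succ \beta$ replacing $\prec \alpha$ and $\succeq \beta$, yields $\Omega_{q}^{-} \subseteq \Omega_{a,p}^{-}$. Restricting to length-$n$ prefixes gives $\lvert \Omega_{q,n}^{\pm} \rvert \leq \lvert \Omega_{a,p,n}^{\pm} \rvert$, and so Definition~\ref{defEntropy} and Theorem~\ref{thm:uniform} together give $h(T) \leq h(U_{a,p}) = \ln(a)$.

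Part (ii) is entirely symmetric: the hypothesis $\alpha' \preceq \alpha \prec \beta \preceq \beta'$ swaps the roles of the two pairs of critical itineraries, so the same implication chains, with the inequalities reversed, produce $\Omega_{a,p}^{\pm} \subseteq \Omega_{q}^{\pm}$ and hence $h(T) \geq \ln(a)$. I expect no serious obstacle: the whole corollary is essentially a repackaging of Theorems~\ref{thm:Main1.5} and~\ref{thm:uniform}. The only delicate point is the bookkeeping of strict versus non-strict inequalities, but the asymmetric placement of $\prec$ and $\preceq$ in the $+$ and $-$ characterisations of Theorem~\ref{thm:Main1.5} is exactly mirrored by the strict inner inequality $\alpha' \prec \beta'$ (respectively $\alpha \prec \beta$) in each hypothesis, so no $\prec$ versus $\preceq$ mismatch arises.
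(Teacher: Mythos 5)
Your proof is correct and is exactly the argument the paper intends: the paper's own proof is a one-line citation of Theorem~\ref{thm:Main1.5}, Theorem~\ref{thm:uniform} and Definition~\ref{defEntropy}, and your write-up simply fills in the routine details (the inclusion of address spaces obtained by comparing critical itineraries, then the cardinality bound on length-$n$ prefixes). The bookkeeping of $\prec$ versus $\preceq$ is handled correctly in both directions.
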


\begin{proof}
This is a direct consequence of \Cref{defEntropy} and \Cref{thm:uniform,thm:Main1.5}.
\end{proof}

In the proofs of some of the following results we let $\overline{0}$ denote the element $0 \, 0 \, \cdots \in \Omega$ and $\overline{1}$ the element $1 \,1  \, \cdots \in \Omega$,

\begin{lemma}\label{thm:Main2}
Given $a \in (1,2)$, there exists $p$ such that $(a, p)$ is admissible and either
\begin{subequations}
\begin{equation}
\alpha \preceq \mu_{a, p}^{-}(p) \prec \mu_{a, p}^{+}(p) \preceq \beta \label{comeq1}
\end{equation}
or
\begin{equation}
\mu_{a, p}^{-}(p)  \preceq \alpha  \prec  \beta \preceq \mu_{a, p}^{+}(p) \label{comeq2}. 
\end{equation}
\end{subequations}
Hence, in the first case $h(T) \leq \ln (a)$, and in the second case $h(T) \geq \ln (a)$.
\end{lemma}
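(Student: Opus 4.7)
The plan is to set up an intermediate-value style sweep in the parameter $p$. Define
\[
\phi^{-}(p) \mathrel{:=} \mu^{-}_{a,p}(p), \qquad \phi^{+}(p) \mathrel{:=} \mu^{+}_{a,p}(p), \qquad p \in [1-a^{-1},\, a^{-1}].
\]
By \Cref{lem2}(iii)--(iv), $\phi^{+}$ is strictly increasing and right-continuous, and $\phi^{-}$ is strictly increasing and left-continuous. The first symbol of $\phi^{-}(p)$ is $0$ and of $\phi^{+}(p)$ is $1$, so $\phi^{-}(p) \prec \phi^{+}(p)$ for every admissible $p$; similarly $\alpha \prec \beta$. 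A direct evaluation at the endpoints should yield
\[
\phi^{-}(a^{-1}) = 0\,\overline{1}, \qquad \phi^{+}(1-a^{-1}) = 1\,\overline{0},
\]
the lex-greatest sequence beginning with $0$ and the lex-least beginning with $1$, respectively. Since $\alpha$ begins with $0$ and $\beta$ with $1$, this already gives the boundary inequalities $\phi^{-}(a^{-1}) \succeq \alpha$ and $\phi^{+}(1-a^{-1}) \preceq \beta$.

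Next I introduce the parameter sets
\[
P_{-} \mathrel{:=} \{p \in [1-a^{-1},a^{-1}] : \phi^{-}(p) \succeq \alpha\}, \qquad P_{+} \mathrel{:=} \{p \in [1-a^{-1},a^{-1}] : \phi^{+}(p) \preceq \beta\}.
\]
Strict monotonicity forces $P_{-}$ to be an upward-closed subinterval containing $a^{-1}$ and $P_{+}$ to be a downward-closed subinterval containing $1-a^{-1}$. If $P_{-}\cap P_{+}\neq\emptyset$, any $p$ in the intersection yields $\alpha\preceq\phi^{-}(p)\prec\phi^{+}(p)\preceq\beta$, which is \eqref{comeq1}. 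Otherwise set $p^{**} \mathrel{:=} \sup P_{+}$ and $p^{*} \mathrel{:=} \inf P_{-}$, so that $p^{**} \leq p^{*}$; whenever the inequality is strict, every $p \in (p^{**}, p^{*})$ satisfies $\phi^{-}(p) \prec \alpha \prec \beta \prec \phi^{+}(p)$, which is \eqref{comeq2}.

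The main obstacle I anticipate is the borderline configuration $P_{-}\cap P_{+}=\emptyset$ with $p^{**}=p^{*}\mathrel{=:}p_{0}$, where the open interval above is empty and one must produce a Case~\eqref{comeq2} configuration at the single point $p_{0}$. This is exactly where the one-sided continuities from \Cref{lem2}(iii)--(iv) become essential. If $p_{0} \notin P_{-}\cup P_{+}$ then $\phi^{-}(p_{0}) \prec \alpha$ and $\phi^{+}(p_{0}) \succ \beta$ directly, giving \eqref{comeq2}. If $p_{0} \in P_{-}\setminus P_{+}$ then, since $1-a^{-1} \in P_{+}$, disjointness forces $p_{0} > 1-a^{-1}$; for every $p < p_{0}$ one has $p\notin P_{-}$, i.e.\ $\phi^{-}(p) \prec \alpha$, so left-continuity of $\phi^{-}$ gives $\phi^{-}(p_{0}) \preceq \alpha$, which together with $p_{0} \in P_{-}$ forces $\phi^{-}(p_{0}) = \alpha$, while $p_{0}\notin P_{+}$ gives $\phi^{+}(p_{0}) \succ \beta$; hence \eqref{comeq2} holds at $p_{0}$. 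The subcase $p_{0}\in P_{+}\setminus P_{-}$ is handled symmetrically, using right-continuity of $\phi^{+}$ to deduce $\phi^{+}(p_{0}) = \beta$ and observing $\phi^{-}(p_{0}) \prec \alpha$; again \eqref{comeq2}. In every scenario an admissible $p$ realizing one of \eqref{comeq1} or \eqref{comeq2} has been produced, and the concluding entropy inequalities follow at once from \Cref{cor:cor2}.
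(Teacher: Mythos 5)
Your proof is correct and follows essentially the same strategy as the paper's: evaluate $\mu^{-}_{a,a^{-1}}(a^{-1})=0\overline{1}$ and $\mu^{+}_{a,1-a^{-1}}(1-a^{-1})=1\overline{0}$ at the endpoints, then sweep $p$ using the monotonicity and one-sided continuity of $p\mapsto\mu^{\pm}_{a,p}(p)$ from \Cref{lem2} and a sup/inf argument, finishing with \Cref{cor:cor2}. Your case analysis at the borderline point $p^{**}=p^{*}$ is in fact spelled out more carefully than in the paper, which compresses that step into a single appeal to \Cref{lem2}.
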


\begin{proof}
Since a lower itinerary starts with $0$ and an upper itinerary starts with $1$, we have $\alpha \preceq 0 \overline{1} = \mu^{-}_{a, a^{-1}}(a^{-1})$ and $\mu^{+}_{a,1 - a^{-1}}(1 - a^{-1}) = 1 \overline{0} \preceq \beta$.  Hence, the inequalities given in \eqref{comeq1} hold for $p = 1 - a^{-1}$, unless 
\begin{align}\label{eq1}
\mu^{-}_{a, 1-a^{-1}}(1 - a^{-1}) \prec \alpha,
\end{align}
and, similarly, the inequalities given in \eqref{comeq1} hold for $p = a^{-1}$, unless
\begin{align} \label{eq2}
\mu^{+}_{a, a^{-1}}(a^{-1}) \succ \beta.
\end{align}
If the inequalities given in \eqref{comeq1} are false for both $p = 1-a^{-1}$ and $p = a^{-1}$, then the inequalities of both \eqref{eq1} and \eqref{eq2} hold.  Let $p_{1} \coloneqq \sup \{ p : \mu^{-}_{a, p}(p) \preceq  \alpha \; \text{and} \; \mu^{+}_{a, p}(p)  \preceq \beta \}$ and  $p_{2} \coloneqq \inf \{ p : \mu^{-}_{a, p}(p) \succeq  \alpha \; \text{and} \; \mu^{+}_{a, p}(p)  \succeq \beta \}$.  \Cref{lem2} implies that $p_{2} \geq p_{1}$ and that if $p_{2} > p > p_{1}$, then either the inequalities given in \eqref{comeq1} or the inequalities given in \eqref{comeq2} hold for $p$.  If $p_{1} = p_{2}$, then \Cref{lem2} implies that the inequalities given in \eqref{comeq2} hold at $p = p_{1} = p_{2}$.

The remaining assertion follows from \Cref{cor:cor2}.
\end{proof}

\begin{lemma}\label{lem:1234}
Let $a \in (\exp(h(T)), 2)$ be fixed.  If $T^{-}(q) \neq 1$ and $T^{+}(q) \neq 0$, then there exists a non-empty open interval $V \subseteq [1-a^{-1}, a^{-1}]$, such that $\alpha \prec \mu_{a, t}^{-}(t) \prec \mu_{a, t}^{+}(t) \prec \beta$, for all $t \in V$.  Moreover, letting
\begin{subequations}\label{eq:pa}
\begin{equation}
p_{1} (a) \!\coloneqq\! \max \left\{1\!-\!a^{-1}, \sup \left\{ p \in [1\!-\!a^{-1}, a^{-1}] : \mu^{-}_{a, p}(p) \preceq  \alpha \, \text{and} \, \mu^{+}_{a, p}(p)  \preceq \beta \right\} \right\}
\end{equation}
and
\begin{equation}
p_{2} (a) \!\coloneqq\! \min \left\{ a^{-1}, \inf \left\{ p \in [1\!-\!a^{-1}, a^{-1}] : \mu^{-}_{a, p}(p) \succeq \alpha \, \text{and} \, \mu^{+}_{a, p}(p)  \succeq \beta \right\}\right\},
\end{equation}
\end{subequations}
we have that $V \subseteq (p_1 (a), p_2 (a))$ and hence $p_1 (a) < p_2 (a)$.
\end{lemma}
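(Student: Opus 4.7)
The plan is to locate an open interval $V$ by identifying two ``crossing thresholds'' for the monotone, one-sided continuous maps $p \mapsto \mu_{a,p}^{-}(p)$ and $p \mapsto \mu_{a,p}^{+}(p)$ from \Cref{lem2}(iii)--(iv), and then use the hypothesis $a > \exp(h(T))$ together with \Cref{cor:cor2}(ii) to rule out the degenerate configuration in which the thresholds would collide.

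First I would turn the hypotheses $T^{-}(q) \neq 1$ and $T^{+}(q) \neq 0$ into the strict lexicographic inequalities $\alpha \prec 0\,\overline{1}$ and $1\,\overline{0} \prec \beta$. Since a lower itinerary always begins with $0$ and an upper one with $1$, in general $\alpha \preceq 0\,\overline{1}$ and $1\,\overline{0} \preceq \beta$; the hypotheses exclude the degenerate equalities $\alpha = 0\,\overline{1}$ and $\beta = 1\,\overline{0}$, making the inequalities strict. Combined with the identifications $\mu_{a, a^{-1}}^{-}(a^{-1}) = 0\,\overline{1}$ and $\mu_{a, 1-a^{-1}}^{+}(1-a^{-1}) = 1\,\overline{0}$ (direct from the definition of $U_{a,p}^{\pm}$ and already used in the proof of \Cref{thm:Main2}), this gives strict inequalities at the two endpoints of $[1-a^{-1}, a^{-1}]$.

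Writing $f(p) := \mu_{a, p}^{-}(p)$ and $g(p) := \mu_{a, p}^{+}(p)$, I would define
\[
p_{*} := \sup\{p \in [1-a^{-1}, a^{-1}] : f(p) \preceq \alpha\}, \qquad q_{*} := \inf\{p \in [1-a^{-1}, a^{-1}] : g(p) \succeq \beta\},
\]
with the conventions $p_{*} < 1-a^{-1}$ when the first set is empty and $q_{*} > a^{-1}$ when the second is empty. The endpoint step then gives $p_{*} < a^{-1}$ and $q_{*} > 1-a^{-1}$. The heart of the argument, which I expect to be the main obstacle, is to show $p_{*} < q_{*}$. I would argue by contradiction: if $p_{*} \geq q_{*}$, then both thresholds necessarily lie in $[1-a^{-1}, a^{-1}]$, so that the pair $(a,p_{*})$ is admissible; left-continuity of $f$ (in $p$) gives $f(p_{*}) \preceq \alpha$, while right-continuity of $g$ combined with strict monotonicity gives $g(p_{*}) \succeq g(q_{*}) \succeq \beta$. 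Since $\alpha \prec \beta$ trivially, $(a, p_{*})$ satisfies the hypothesis of \Cref{cor:cor2}(ii), forcing $h(T) \geq \ln(a)$ and contradicting $a > \exp(h(T))$.

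Once $p_{*} < q_{*}$ is established, I would set $V := (\max(p_{*}, 1-a^{-1}), \min(q_{*}, a^{-1}))$, a non-empty open subinterval of $[1-a^{-1}, a^{-1}]$ by the endpoint inequalities. For $t \in V$, $t > p_{*}$ together with the definition of $p_{*}$ and strict monotonicity forces $f(t) \succ \alpha$, and $t < q_{*}$ forces $g(t) \prec \beta$; the middle inequality $f(t) \prec g(t)$ is immediate since these sequences begin with $0$ and $1$ respectively. Finally the inclusion $V \subseteq (p_{1}(a), p_{2}(a))$ follows from the definitions: the supremum-set defining $p_{1}(a)$ sits inside $\{p : f(p) \preceq \alpha\}$, whose supremum is at most $p_{*}$, hence $p_{1}(a) \leq \max(1-a^{-1}, p_{*}) < t$; symmetrically the infimum-set defining $p_{2}(a)$ sits inside $\{p : g(p) \succeq \beta\}$, whose infimum is at least $q_{*}$, hence $p_{2}(a) \geq \min(a^{-1}, q_{*}) > t$. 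Non-emptiness of $V$ then yields $p_{1}(a) < p_{2}(a)$.
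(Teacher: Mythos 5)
Your proof is correct, but it takes a noticeably different route from the paper's. The paper first invokes \Cref{thm:Main2} to produce a single admissible $p$ at which $\alpha \prec \mu^{-}_{a,p}(p) \prec \mu^{+}_{a,p}(p) \preceq \beta$ (or the mirror configuration) holds, rules out the endpoints $p \in \{1-a^{-1}, a^{-1}\}$ using $T^{+}(q)\neq 0$ and $T^{-}(q)\neq 1$ exactly as you do, and then perturbs $p$ by a small $\epsilon$, using the one-sided continuity and strict monotonicity of $t \mapsto \mu^{\pm}_{a,t}(t)$ from \Cref{lem2} to turn the one non-strict inequality into a strict one; $V$ is the resulting one-sided neighbourhood $(p-\delta, p)$ or $(p, p+\delta)$. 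You instead work globally: you introduce the two single-condition thresholds $p_{*}$ and $q_{*}$, separate them by feeding the configuration $\mu^{-}_{a,p_{*}}(p_{*}) \preceq \alpha \prec \beta \preceq \mu^{+}_{a,p_{*}}(p_{*})$ into \Cref{cor:cor2}(ii) to contradict $a > \exp(h(T))$, and take $V$ to be the entire gap between the thresholds. Your argument bypasses \Cref{thm:Main2} altogether (in effect re-deriving the half of it that is needed) and yields an explicit, essentially maximal $V$, at the cost of having to handle the sup/inf bookkeeping yourself; the paper's version is shorter because \Cref{thm:Main2} already did that bookkeeping. Two small points worth making explicit in your write-up: the strict bounds $p_{*} < a^{-1}$ and $q_{*} > 1-a^{-1}$ do not follow merely from the endpoints failing the defining conditions --- they require the same one-sided continuity argument you use later at $p_{*}$; and the step from $f(p_{n}) \preceq \alpha$ with $p_{n} \nearrow p_{*}$ to $f(p_{*}) \preceq \alpha$ uses that $\{\omega \in \Omega : \omega \preceq \alpha\}$ is closed in $(\Omega, d)$, which is true but deserves a sentence.
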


\begin{proof}
Since $\ln (a) > h(T)$, by \Cref{thm:Main2}, there exists $p$ such that $(a, p)$ is admissible and that least one of the following sets of inequalities hold:
\begin{subequations}
\begin{equation}
\alpha \prec \mu_{a, p}^{-}(p) \prec \mu_{a, p}^{+}(p) \preceq \beta,\label{comeq3}
\end{equation}
or
\begin{equation}
\alpha \preceq \mu_{a, p}^{-}(p) \prec \mu_{a, p}^{+}(p) \prec \beta.\label{comeq4}
\end{equation}
\end{subequations}
(Observe that the situation in which $\alpha = \mu_{a, p}^{-}(p)$ and $\mu_{a, p}^{+}(p) = \beta$ cannot occur since $\ln (a) > h(T)$.)  Let such a $p$ be fixed.  If $p = 1 - a^{-1}$, then, by the definition of the itinerary map and the fact that  $T^{+}(q) \neq 0$, we have that $\beta \succ 1\overline{0}$ and that $\mu_{a, p}^{+}(p) = 1\overline{0}$.  Hence, the inequalities given in \eqref{comeq4} hold.  Similarly, if $p = a^{-1}$, then $\alpha \prec 0\overline{1}$ and $\mu_{a, p}^{-}(p) = 0\overline{1}$, hence the inequalities given in \eqref{comeq3} hold.

Suppose that $p \not\in \{ 1-a^{-1}, a^{-1}\}$ and that the inequalities given in \eqref{comeq3} hold.  Let $r \coloneqq d(\mu_{a, p}^{-}(p), \alpha) > 0$.  By \Cref{lem2}~(ii), we have $\lim_{\epsilon \searrow 0} d(\mu_{a, p-\epsilon}^{-}(p - \epsilon), \mu_{a, p}^{-}(p)) = 0$.  Therefore, there exists $\delta = \delta(r) \in (0, p - 1 + a^{-1})$ such that, for all $\epsilon < \delta = \delta(r)$,  $d(\mu_{a, p-\epsilon}^{-}(p-\epsilon), \mu_{a, p}^{-}(p)) < r/2$.  Now, \Cref{lem2}~(iv), the definition of the metric $d$ and that of the lexicographic ordering, together with the above inequality, imply that $\alpha \prec \mu_{a, p-\epsilon}^{-}(p-\epsilon) \prec \mu_{a, p}^{-}(p)$, for all $\epsilon < \delta$.  Thus, by \Cref{lem2}~(iii), we have that $\mu_{a, p-\epsilon}^{+}(p-\epsilon) \prec \mu_{a, p}^{+}(p)$, for all $\epsilon < \delta$.  Therefore, by the definition of the itinerary maps $t \mapsto \mu_{a, t}^{\pm}(t)$ and by the assumption that the inequalities given in \eqref{comeq3} hold, we have that $\alpha \prec \mu_{a, p-\epsilon}^{-}(p-\epsilon) \prec \mu_{a, p-\epsilon}^{+}(p - \epsilon) \prec  \mu_{a, p}^{+}(p ) \prec \beta$, for all $\epsilon < \delta$.  Furthermore, since $\delta \in (0, p - 1 + a^{-1})$ and since $p \in (1-a^{-1}, a^{-1}]$, it follows that $(p - \delta, p) \subset (1-a^{-1}, a^{-1})$.  Setting $V \coloneqq (p - \delta, p)$ yields the required result.

A similar argument yields the required result under the assumption of the inequalities given in \eqref{comeq4} for our fixed $p$.

The remaining assertion is an immediate consequence of the definitions of $p_{1} (a)$ and $p_{2} (a)$ and \Cref{lem2}.
\end{proof}

\begin{lemma}\label{lem:bounds1.0}
The restriction of the coding map $\pi_{a}$ to the set $\Omega_{a, p}^{+}$ and the restriction of $\pi_{a}$ to the set $\Omega_{a, p}^{-}$ are strictly increasing, for all admissible pairs $(a, p)$. Furthermore, the restriction of the coding map $\pi_{a}$ to the set $\Omega_{a, p}^{+} \cup \Omega_{a, p}^{-}$ is increasing.  
\end{lemma}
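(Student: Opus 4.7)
The plan is to derive the lemma from two ingredients already at our disposal: \Cref{prop:code-map}, which gives $\pi_a \circ \mu_{a,p}^\pm = \mathrm{id}_{[0,1]}$, and the strict monotonicity plus one-sided continuity of $\mu_{a,p}^\pm$ supplied by \Cref{lem2}. For the strict monotonicity on $\Omega_{a,p}^+$, I would write arbitrary $\omega, \sigma \in \Omega_{a,p}^+$ as $\omega = \mu_{a,p}^+(x)$ and $\sigma = \mu_{a,p}^+(y)$, so that \Cref{prop:code-map} identifies $\pi_a(\omega) = x$ and $\pi_a(\sigma) = y$. If $\omega \prec \sigma$, then strict $\prec$-monotonicity of $\mu_{a,p}^+$ (\Cref{lem2}(i)) rules out $x \geq y$, so $\pi_a(\omega) < \pi_a(\sigma)$. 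The argument for $\Omega_{a,p}^-$ is identical, using \Cref{lem2}(ii) in place of (i).

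For the weak monotonicity on $\Omega_{a,p}^+ \cup \Omega_{a,p}^-$, the key step will be the auxiliary claim that, for all $y < x$ in $[0,1]$, one has $\mu_{a,p}^+(y) \prec \mu_{a,p}^-(x)$ and $\mu_{a,p}^-(y) \prec \mu_{a,p}^+(x)$. To prove the first, I would fix any $z \in (y,x)$: strict monotonicity gives $\mu_{a,p}^+(y) \prec \mu_{a,p}^+(z)$, and for each $\epsilon \in (0, x-z)$, strict monotonicity again gives $\mu_{a,p}^+(z) \prec \mu_{a,p}^+(x-\epsilon)$; letting $\epsilon \searrow 0$ and invoking \Cref{lem2}(i) yields $\mu_{a,p}^+(z) \preceq \mu_{a,p}^-(x)$, and chaining gives the required strict inequality. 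The second inequality follows by the same argument with the roles of $+$ and $-$ swapped, now using \Cref{lem2}(ii).

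Granted the claim, the final assertion follows at once. Given $\omega, \sigma \in \Omega_{a,p}^+ \cup \Omega_{a,p}^-$ with $\omega \prec \sigma$, write $\omega = \mu_{a,p}^{\epsilon_1}(x)$ and $\sigma = \mu_{a,p}^{\epsilon_2}(y)$ with $\epsilon_1,\epsilon_2 \in \{+,-\}$, so $\pi_a(\omega) = x$ and $\pi_a(\sigma) = y$ by \Cref{prop:code-map}. The case $\epsilon_1 = \epsilon_2$ is covered by the strict monotonicity already proven, while in the mixed case the inequality $x > y$ would contradict the auxiliary claim; hence $x \leq y$, which is exactly $\pi_a(\omega) \leq \pi_a(\sigma)$. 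The main (minor) obstacle will be careful bookkeeping of strict-versus-weak inequalities when passing to the one-sided limits in \Cref{lem2}, but since \Cref{prop:code-map} makes $\pi_a$ an honest inverse of $\mu_{a,p}^\pm$ on each address space, no identification subtleties arise and I do not expect a genuine difficulty.
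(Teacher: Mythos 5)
Your proposal is correct and follows essentially the same route as the paper: both parts reduce to \Cref{prop:code-map} (so that $\pi_a$ inverts $\mu_{a,p}^{\pm}$ on the address spaces) together with the strict monotonicity and one-sided limit relations of \Cref{lem2}, and your auxiliary claim $\mu_{a,p}^{+}(y)\prec\mu_{a,p}^{-}(x)$ for $y<x$ is precisely the contradiction the paper extracts in its mixed case via $\mu_{a,p}^{+}(z)=\lim_{\epsilon\searrow 0}\mu_{a,p}^{-}(z+\epsilon)\prec\mu_{a,p}^{-}(y)$. The only cosmetic difference is that you isolate the cross-inequality as a standalone claim rather than arguing by contradiction inside a case analysis.
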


\begin{proof}
The first statement follows from \Cref{lem2} and \Cref{prop:code-map}.

To show that the restriction of $\pi_{a}$ to $\Omega_{a, p}^{+} \cup \Omega_{a, p}^{-}$ is increasing, let $\omega, \omega' \in \Omega_{a, p}^{+} \cup \Omega_{a, p}^{-}$ be such that $\omega \preceq \omega'$.  One of the following situations must now occur.
\begin{enumerate}
\item  $\omega, \omega' \in \Omega^{+}_{a, p}\;$ or $\;\omega, \omega' \in \Omega^{-}_{a, p}$,
\item  $\omega \in \Omega^{-}_{a, p} \setminus \Omega^{+}_{a, p}$ and $\omega' \in \Omega^{+}_{a, p} \setminus \Omega^{-}_{a, p}$, or 
\item $\omega \in \Omega^{+}_{a, p} \setminus \Omega^{-}_{a, p}$ and $\omega' \in \Omega^{-}_{a, p} \setminus \Omega^{+}_{a, p}$.  
\end{enumerate}
If (i) occurs, then by the fact that the restriction of $\pi_{a}$ to the set $\Omega_{a, p}^{+}$ is strictly increasing  and the restriction of $\pi_{a}$ to the set $\Omega_{a, p}^{-}$ is strictly increasing, it follows that $\pi_{a}(\omega) < \pi_{a}(\omega')$.

Suppose (ii) occurs.  Let $y \coloneqq \pi_{a}(\omega)$ and $z \coloneqq \pi_{a}(\omega')$. By way of contradiction, assume  $y > z$.  \Cref{lem2} implies
\begin{align}\label{eq:lim_relation_+_}
\mu_{a, p}^{+}(z) &= \lim_{\epsilon \searrow 0} \mu_{a, p}^{-}(z + \epsilon).
\intertext{Now}
\omega' = \mu^{+}_{a, p}(z) &= \lim_{\epsilon \searrow 0} \mu^{-}_{a, p}(z + \epsilon) \prec \mu^{-}_{a, p}(y) = \omega,
\label{eq:3.13}
\end{align}
where the first equality holds since  $\omega'\in \Omega_{a, p}^{+}$, and so there exists $x \in [0,1]$ such that $ \omega ' =  \mu^{+}_{a, p}(x)$.  Then, by \Cref{prop:code-map}, we have $z \coloneqq \pi_{a}(\omega') = \pi_{a}(\mu^{+}_{a, p}(x)) = x$.  Hence $\omega' = \mu^{+}_{a, p}(x) =\mu^{+}_{a, p}(z)$.  The second equality in \eqref{eq:3.13} follows from \eqref{eq:lim_relation_+_}; the following inequality is due to \Cref{lem2} and the fact that $y > z + \epsilon$ for all sufficiently small $\epsilon > 0$; and the last equality follows in exactly the same way as the first equality. Therefore, $\omega' \prec \omega$, which contradicts our hypothesis, namely that $\omega \preceq \omega'$.

If (iii) occurs, then similar argument to those given above will yield that \mbox{$\pi_{a}(\omega) \leq \pi_{a}(\omega')$}.
\end{proof}

\begin{lemma}\label{lem:bounds}
If $2 > a > \exp(h(T))$, $T^{-}(q) \neq 1$ and $T^{+}(q)\neq 0$, then $\pi_{a}(\alpha) < \pi_{a}(\beta)$ and
\begin{align}
\emptyset \neq (p_1 (a), p_2 (a)) \subseteq (\pi_{a}(\alpha),\pi_{a}(\beta)) \cap (1-a^{-1}, a^{-1}),
\end{align}
where $p_{1}(a)$ and $p_{2}(a)$ are the real numbers defined in \eqref{eq:pa} respectively.
\end{lemma}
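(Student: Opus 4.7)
The plan is to fix an arbitrary $p \in (p_1(a), p_2(a))$ and deduce the two strict inequalities $\alpha \prec \mu_{a,p}^{-}(p)$ and $\mu_{a,p}^{+}(p) \prec \beta$.  Once these hold, I would pull them through the coding map $\pi_a$ using \Cref{lem:bounds1.0} and \Cref{prop:code-map} to obtain $\pi_a(\alpha) < p < \pi_a(\beta)$ for every such $p$, which gives all three assertions of the lemma at once.  Non-emptiness of $(p_1(a), p_2(a))$ has already been established by \Cref{lem:1234}, and the containment $(p_1(a), p_2(a)) \subseteq (1-a^{-1}, a^{-1})$ is immediate from the definitions in \eqref{eq:pa} (so in particular every $p$ in this range gives an admissible pair).

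To obtain the two strict inequalities I would do a small case analysis on $p$.  Since $p$ lies strictly above the supremum defining $p_1(a)$, at least one of $\mu_{a,p}^{-}(p) \succ \alpha$ or $\mu_{a,p}^{+}(p) \succ \beta$ must hold; similarly, $p$ strictly below the infimum defining $p_2(a)$ forces at least one of $\mu_{a,p}^{-}(p) \prec \alpha$ or $\mu_{a,p}^{+}(p) \prec \beta$.  Combining these two disjunctions eliminates two self-contradictory cases and leaves exactly two possibilities: the desired configuration $\alpha \prec \mu_{a,p}^{-}(p)$ and $\mu_{a,p}^{+}(p) \prec \beta$, or the \emph{crossed} configuration $\mu_{a,p}^{-}(p) \prec \alpha \prec \beta \prec \mu_{a,p}^{+}(p)$.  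Ruling out the crossed case is the main obstacle, and also the only place where the hypothesis on $a$ is used: in that case \Cref{cor:cor2}(ii) would force $h(T) \geq \ln a$, contradicting $a > \exp(h(T))$.

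With the two strict inequalities in hand, I would next verify that $\alpha \in \Omega_{a,p}^{-}$ and $\beta \in \Omega_{a,p}^{+}$ by playing the two instances of \Cref{thm:Main1.5} against each other.  Concretely, for any $n \in \mathbb{N}_0$ the characterisation of $\Omega_q^{-}$ gives $S^n(\alpha) \preceq \alpha$ or $\beta \prec S^n(\alpha)$; substituting $\alpha \prec \mu_{a,p}^{-}(p)$ in the first alternative and $\mu_{a,p}^{+}(p) \prec \beta$ in the second yields $S^n(\alpha) \preceq \mu_{a,p}^{-}(p)$ or $\mu_{a,p}^{+}(p) \prec S^n(\alpha)$, which is exactly the characterisation of $\Omega_{a,p}^{-}$ furnished by \Cref{thm:Main1.5}.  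The argument placing $\beta$ in $\Omega_{a,p}^{+}$ is entirely symmetric.  Since $\alpha, \mu_{a,p}^{-}(p) \in \Omega_{a,p}^{-}$ with $\alpha \prec \mu_{a,p}^{-}(p)$, \Cref{lem:bounds1.0} together with $\pi_a(\mu_{a,p}^{-}(p)) = p$ from \Cref{prop:code-map} yields $\pi_a(\alpha) < p$, and analogously $p < \pi_a(\beta)$.  Letting $p$ range over the non-empty interval $(p_1(a), p_2(a))$ then delivers the claimed containment and, in particular, $\pi_a(\alpha) < \pi_a(\beta)$.
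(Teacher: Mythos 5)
Your proof is correct and follows essentially the same route as the paper: the same dichotomy at points of $(p_1(a),p_2(a))$, elimination of the crossed configuration via \Cref{cor:cor2} and the hypothesis $a>\exp(h(T))$, and the transfer of the strict inequalities through $\pi_a$ using \Cref{thm:Main1.5}, \Cref{lem:bounds1.0} and \Cref{prop:code-map}. The only difference is organizational: the paper first extracts a (possibly boundary) $p$ from \Cref{thm:Main2} to get $\pi_a(\alpha)<\pi_a(\beta)$ before treating the interval, whereas you obtain everything in one pass over $(p_1(a),p_2(a))$.
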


\begin{proof}
Suppose that $a \in (\exp(h(T)), 2)$. By \Cref{thm:Main2}, there exists $p$ such that $(a, p)$ is admissible and either one of the following sets of inequalities hold,
\begin{enumerate}
\item $\alpha \prec \mu_{a, p}^{-}(p)$ and $\mu_{a, p}^{+}(p) \preceq \beta$, or
\item $\alpha \preceq \mu_{a, p}^{-}(p)$ and $\mu_{a, p}^{+}(p) \prec \beta$.
\end{enumerate}
Note that the situation where $\alpha = \mu_{a, p}^{-}(p)$ and $\mu_{a, p}^{+}(p) = \beta$ cannot occur as $a > \exp(h(T))$.

Assume that (i) occurs.  By \Cref{thm:Main1.5} it follows that $\Omega_{q}^{-} \subset \Omega_{a, p}^{-}$ and $\Omega_{q}^{+} \subseteq \Omega_{a, p}^{+}$.  In particular, $\alpha \in \Omega^{-}_{a, p}$ and $\beta \in \Omega^{+}_{a, p}$.  Since, by \Cref{lem:bounds1.0}, the coding map $\pi_{a}$ is strictly increasing on $\Omega_{a, p}^{+}$ and on $\Omega_{a, p}^{-}$, we have
\begin{align}\label{eq:bounds1}
\pi_{a}(\alpha) < \pi_{a}(\mu_{a, p}^{-}(p)) = p = \pi_{a}(\mu_{a, p}^{+}(p)) \leq \pi_{a}(\beta).
\end{align}
If (ii) occurs, then essentially the same arguments as those above yield
\begin{align}\label{eq:bounds2}
\pi_{a}(\alpha) \leq \pi_{a}(\mu_{a, p}^{-}(p)) = p = \pi_{a}(\mu_{a, p}^{+}(p)) < \pi_{a}(\beta).
\end{align}
Hence, $\pi_{a}(\alpha) < \pi_{a}(\beta)$ and $[\pi_{a}(\alpha),\pi_{a}(\beta)] \cap [1-a^{-1}, a^{-1}] \neq \emptyset$.

We now show that the open interval $(\pi_{a}(\alpha),\pi_{a}(\beta)) \cap (1-a^{-1}, a^{-1})$ is non-empty.  Observe that, by \Cref{lem2} and the definition of $p_1 (a)$ and $p_2 (a)$, for all $t \in (p_1 (a), p_2 (a))$, there are two possible sets of inequalities that can occur:
\begin{enumerate}
\item[(a)] $\alpha \succ \mu^{-}_{a, t}(t)$ and $\beta \prec \mu^{+}_{a, p}(t)$, or
\item[(b)] $\alpha \prec \mu^{-}_{a, t}(t)$ and $\beta \succ \mu^{+}_{a, p}(t)$.
\end{enumerate}
The set of inequalities in (a), however, cannot occur.  If they did, \Cref{thm:uniform,thm:Main1.5} and the definition of topological entropy, would imply $\ln (a) \leq h(T)$, contradicting our hypothesis.  Thus, by \eqref{eq:bounds1} and \eqref{eq:bounds2} we have
\begin{align}\label{eq:bounds-p_1-p_2}
(p_1 (a), p_2 (a)) \subseteq [\pi_{a}(\alpha), \pi_{a}(\beta)] \cap [1-a^{-1}, a^{-1}].
\end{align}
Since our hypothesis is the same as that of \Cref{lem:1234}, we have that $p_1 (a) < p_2 (a)$, and so, the open interval $(p_1 (a), p_2 (a))$ is non-empty.  This, in tandem with \eqref{eq:bounds-p_1-p_2}, implies $(\pi_{a}(\alpha), \pi_{a}(\beta)) \cap (1-a^{-1}, a^{-1}) \neq \emptyset$.
\end{proof}

\section{Proof of \Cref{Main_THM,THM:END}}\label{Sec:Proof_of_Main_THM}


\begin{proof}[Proof of \Cref{Main_THM}.]
We proceed by showing that (i) $\Rightarrow$ (ii) $\Rightarrow$ (iii) $\Rightarrow$ (i).

(i) $\Rightarrow$ (ii) Fix $a \in (\exp(h(T)), 2)$.  By \Cref{lem:bounds}, we have $\emptyset\neq(p_1 (a), p_2 (a)) \subseteq (\pi_{a}(\alpha), \pi_{a}(\beta)) \cap (1-a^{-1}, a^{-1})$.  Moreover, for each $p \in (p_1 (a), p_2 (a)) \subseteq (\pi_{a}(\alpha), \pi_{a}(\beta)) \cap (1-a^{-1}, a^{-1})$,
\begin{align}\label{eq:not_equal_comparasions}
\alpha \prec \mu_{a, p}^{-}(p) \quad \text{and} \quad \mu_{a, p}^{+}(p) \prec \beta.
\end{align}
(We remind the reader that $\alpha \coloneqq \tau_{q}^{-}(q)$ and $\beta \coloneqq \tau_{q}^{+}(q)$ are the critical itineraries of $([0, 1], T^{\pm})$.)  Let such a $p$ be fixed.  By \Cref{thm:Main1.5} and the inequalities given in \eqref{eq:not_equal_comparasions} we have 
\begin{align}\label{eq:subset_code_space_p_q}
\Omega_{q}^{-} \subset \Omega_{a, p}^{-} \quad \text{and} \quad \Omega_{q}^{+} \subset \Omega_{a, p}^{+}.
\end{align}
By \Cref{thm:Main1.5}, the inclusions in \eqref{eq:subset_code_space_p_q}, and the fact that the map $\pi_{a}\vert_{\Omega_{a, p}^{+} \cup \Omega^{-}_{a, p}}$ is increasing (\Cref{lem:bounds1.0}), we have that $\pi_{a}(\omega) \in [0, \pi_{a}(\alpha)] \cup [\pi_{a}(\beta), 1]$, for all $\omega \in \Omega_{q}^{+} \cup\Omega_{q}^{-}$.  In other words
\begin{align}\label{eq:codevinterval}
\pi_{a}(\Omega_{q}^{+} \cup\Omega_{q}^{-}) \subseteq  [0, \pi_{a}(\alpha)] \cup [\pi_{a}(\beta), 1].
\end{align}
We claim that, for each $x \in \pi_{a}(\Omega_{q}^{+} \cup \Omega_{q}^{-})$ and $p' \in (\pi_{a}(\alpha),\pi_{a}(\beta)) \cap (1-a^{-1}, a^{-1})$,
\begin{align*}
U_{a,p}^{\pm}(x) = U_{a,p'}^{\pm}(x),
\quad
\text{and}
\quad
U_{a, p'}^{\pm}(\pi_{a}(\Omega_{q}^{+} \cup \Omega_{q}^{-}) ) \subseteq \pi_{a}(\Omega_{q}^{+} \cup \Omega_{q}^{-}), 
\end{align*}
It follows from this claim that, for all  $p' \in (\pi_{a}(\alpha),\pi_{a}(\beta)) \cap (1-a^{-1}, a^{-1})$,
\begin{align}\label{eq:same_itinerary}
 \mu_{a, p'}^{\pm}(x) = \mu_{a, p}^{\pm}(x) \quad \text{for all} \quad x \in \pi_{a}(\Omega_{q}^{+} \cup \Omega_{q}^{-}).
\end{align} 
To prove the claim, let $p' \in (\pi_{a}(\alpha),\pi_{a}(\beta)) \cap (1-a^{-1}, a^{-1})$ and $x \in \pi_{a}(\Omega_{q}^{+} \cup \Omega_{q}^{-})$.  In light of the inclusion given in \eqref{eq:codevinterval} there are two cases, either $x \in \pi_{a}(\Omega_{q}^{+} \cup \Omega_{q}^{-}) \cap [ 0, \pi_{a}(\alpha)]$ or $x \in \pi_{a}(\Omega_{q}^{+} \cup \Omega_{q}^{-}) \cap [\pi_{a}(\beta),  1]$.  As the proofs are essentially the same, we take  $x \in \pi_{a}(\Omega_{q}^{+} \cup \Omega_{q}^{-}) \cap [ 0, \pi_{a}(\alpha)]$.  Since $p, p' \in (\pi_{a}(\alpha),\pi_{a}(\beta)) \cap (1-a^{-1}, a^{-1})$, we have that $\pi_{a}(\alpha) < \min\{ p, p'\}$.
Moreover, $x \leq \pi_{a}(\alpha) < \min \{ p, p' \}$; in particular $x \neq p$ and $x \neq p'$. 
From this and the definition of the functions  $U_{a, p}^{\pm}$, it can be concluded that 
\begin{align}\label{eq:conclude}
U_{a, p}^{\pm}(x) = U_{a, p'}^{\pm}(x).
\end{align}
Since $x \in \pi_{a}(\Omega_{q}^{+} \cup \Omega_{q}^{-}) \cap [0, \pi_{a}(\alpha)]$, there exists $\omega \in\Omega_{q}^{+} \cup \Omega_{q}^{-}$ such that $x=\pi_{a}(\omega)$, and so
\begin{align*}
U_{a, p'}^{\pm}(x) = U_{a, p}^{\pm}(x) = U_{a, p}^{\pm}(\pi_{a}(\omega)) = \pi_{a}(S(\omega)) \in \pi_{a}(\Omega_{q}^{+} \cup \Omega_{q}^{-}),
\end{align*}
where the first equality follows from \eqref{eq:conclude}; the second equality follows from the fact that $x = \pi_{a}(\omega)$; the final equality follows from the inclusions given in \eqref{eq:subset_code_space_p_q} and \Cref{prop:code-map}; and the inclusion $\pi_{a}(S(\omega)) \in \pi_{a}(\Omega_{q}^{+} \cup \Omega_{q}^{-})$  is due to that fact that $\Omega_{q}^{+} \cup \Omega_{q}^{-}$ is forward shift sub-invariant (\Cref{lem:shift}).  Thus the claim is proved. 

By the inclusion given in \eqref{eq:subset_code_space_p_q} we have that $\alpha \in \Omega^{-}_{a, p}$ and $\beta \in \Omega^{+}_{a, p}$.  So there exist $x, y \in [0, 1]$ such that $\alpha = \mu_{a, p}^{-}(x)$ and $\beta = \mu_{a, p}^{+}(y)$.  Therefore, by \Cref{prop:code-map} we have that $\mu_{a, p}^{-}(\pi_{a}(\alpha)) = \mu_{a, p}^{-}(\pi_{a}(\mu_{a, p}^{-}(x)) = \mu_{a, p}^{-}(x) = \alpha$ and $\mu_{a, p}^{+}(\pi_{a}(\beta)) = \mu_{a, p}^{+}(\pi_{a}(\mu_{a, p}^{+}(y)) = \mu_{a, p}^{+}(y) = \beta$.  This, in combination with \eqref{eq:same_itinerary}, implies that $\mu_{a, p'}^{-}(\pi_{a}(\alpha)) = \alpha$ and $\mu_{a, p'}^{+}(\pi_{a}(\beta)) = \beta$,  for all $p' \in (\pi_{a}(\alpha),\pi_{a}(\beta)) \cap (1-a^{-1}, a^{-1})$.  Hence, $\alpha \in \Omega_{a, p'}^{-}$ and $\beta \in \Omega_{a, p'}^{+}$.  It follows from \Cref{thm:Main1.5} that $\alpha \in [\overline{0}, \mu_{a, p'}^{-}(p')] \cup (\mu_{a, p'}^{+}(p'), \overline{1}]$.  (We remind the reader that $\overline{0}$ denotes the element $0 \, 0 \,0 \, \cdots \in \Omega$ and $\overline{1}$ to denotes the element $1 \,1 \,1 \, \cdots \in \Omega$.)  Since $\alpha$ begins with $01$, it must be the case that $\alpha\in [\overline{0}, \mu_{a, p'}^{-}(p')]$.  Moreover, $\alpha \neq \mu_{a, p'}^{-}(p')$, since if $\alpha = \mu_{a, p'}^{-}(p')$, then  by \Cref{prop:code-map} we would have that $\pi_{a}(\alpha) = \pi_{a}( \mu_{a, p'}^{-}(p')) = p'$, which contradicts that $p'\in (\pi_{a}(\alpha),\pi_{a}(\beta)) \cap (1-a^{-1}, a^{-1})$. A similar argument shows that $\beta \in (\mu_{a, p'}^{+}(p'), \overline{1}]$. Therefore, $\alpha \prec \mu_{a, p'}^{-}(p')$ and $\beta \succ \mu_{a, p'}^{+}(p')$, for all $p' \in (\pi_{a}(\alpha),\pi_{a}(\beta)) \cap (1-a^{-1}, a^{-1})$.

(ii) $\Rightarrow$ (iii) This is an immediate consequence of \Cref{thm:Main1.5}.

(iii) $\Rightarrow$ (i) If  $\Omega_{q}^{-} \subset \Omega_{a, p}^{-} $ and $ \Omega_{q}^{+} \subset \Omega_{a, p}^{+}$ for $p\in (\pi_{a}(\alpha),\pi_{a}(\beta)) \cap (1-a^{-1}, a^{-1})$, then by \Cref{thm:Main1.5} we have that $\alpha \preceq \mu_{a, p}^{-}(p) \prec \mu_{a, p}^{+}(p) \prec \beta$ or $\alpha \prec \mu_{a, p}^{-}(p) \prec \mu_{a, p}^{+}(p) \preceq \beta$,  and so by \Cref{cor:cor2} we have that $\exp(h(T)) \leq a$. We will now show that $\exp(h(T)) \not= a$ if  $\Omega_{q}^{\pm} \subset \Omega_{a, p}^{\pm} $. In order to reach a contradiction, suppose that $\exp(h(T)) = a$ and that $\Omega_{q}^{+} \subset \Omega_{a, p}^{+}$ and $\Omega_{q}^{-} \subset \Omega_{a, p}^{-}$, for some $p$ belonging to the interval $(\pi_{a}(\alpha),\pi_{a}(\beta)) \cap (1-a^{-1}, a^{-1})$.  Therefore, fix $p$, such that either 
\begin{align}\label{EQ:(iii)->(i)}
\alpha \preceq \mu_{a, p}^{-}(p) \prec \mu_{a, p}^{+}(p) \prec \beta\quad \text{or} \quad \alpha \prec \mu_{a, p}^{-}(p) \prec \mu_{a, p}^{+}(p) \preceq \beta
\end{align}
holds.  By \cite{G} our given Lorenz system $([0,1], T^{\pm})$ is semi-conjucate to some uniform Lorenz system $([0,1], U^{\pm}_{s, p'})$. Moreover, since the semi-conjugacy preserves topological entropy (\Cref{lemma1}) and since by \Cref{thm:uniform} we have that $h(U_{s, p'}^{\pm}) = \ln(s)$, it follows that $s = \exp(h(T)) = a$. Hence, by \Cref{lemma1}, we have that $\Omega_{s, p'}^{\pm} \subseteq\Omega_{q}^{\pm}$ and therefore,
\begin{align}\label{EQ:(iii)->(i)2}
 \mu_{a, p'}^{-}(p')\preceq \alpha \quad \text{and} \quad \beta\preceq\mu_{a, p'}^{+}(p') .
\end{align}
Combining \eqref{EQ:(iii)->(i)2} with \eqref{EQ:(iii)->(i)} and then applying \Cref{lem2} gives a desired contradiction.
 \end{proof}

Before presenting the proof of \Cref{THM:END} we given the following example which illustrates the importance of taking the intersection of $(\pi_{a}(\alpha),\pi_{a}(\beta))$ with the $(1-a^{-1}, a^{-1})$ in \Cref{Main_THM} (ii) and (iii).
 
\begin{example}\label{EXMP:inequalities}
An instance of when the inequality $\pi_{a}(\beta) > a^{-1}$ can occur is when $T^{\pm}$ is a Lorenz map where the first branch is a linear function with gradient close to $1$ and the second branch is a function of high polynomial or exponential growth.  An explicit example of such a map is the Lorenz map with critical point $1/2$ given by the functions $f_{0}(x) \coloneqq 1.001x$ and $f_{1}(x) \coloneqq \exp(x + \ln(2) - 1) - 1$.  In this case the inequality $\pi_{a}(\alpha) < 1- a^{-1}$ is satisfied for $a = 3/2 > \exp(h(T)) \approx 1.00125$.  (This latter value was calculated using an implemented version of the algorithm presented in \Cref{Sec:Algorithm}, with a tolerance $\epsilon = 0.0001$ and a truncation tern $n = 25,000$.)  By reversing the roles of the first and second branch one obtains a Lorenz map with $\pi_{a}(\beta) > a^{-1}$.
\end{example}

\begin{proof}[Proof of \Cref{THM:END}.]
Since the proofs for (i) and (ii) are essentially the same, we only include a proof of (i).  The result is proved by showing the following set of implications (a) $\Rightarrow$ (b) $\Rightarrow$ (c) $\Rightarrow$ (a).

(a) $\Rightarrow$ (b) Let $a \in (\exp(h(T)), 2)$ and suppose that the inequalities given in \Cref{THM:END}~(i)~(b) do not hold for $p = a^{-1}$.  Then by definition we have that $\tau_{q}^{-}(q) = \mu_{a, a^{-1}}(a^{-1}) = 0 \overline{1}$.  An application of \Cref{cor:cor2} then leads to a contradiction to how the parameter $a$ was originally chosen.  The uniqueness follows directly from \Cref{lem2}.

(b) $\Rightarrow$ (c) This is a direct consequence of \Cref{thm:Main1.5} and the fact that $a > \exp(h(T))$.

(c) $\Rightarrow$ (a) The proof is essentially the same as the proof of (iii) $\Rightarrow$ (i) of \Cref{Main_THM}.
\end{proof}

\section{An algorithm to compute the topological entropy of a Lorenz map}\label{Sec:Algorithm}

The numerical computation of topological entropy of one dimensional dynamical systems has received much attention; see for instance \cite{BK,BKSP,FMT,Mil}.  Based on \Cref{Main_THM,THM:END}, we next provide a new algorithm to compute the topological entropy of a Lorenz system.  The algorithm is stated assuming infinite arithmetic precision.  However, with straightforward modifications, the algorithm can be practically implemented.  Such an implementation was used in obtaining the sample results presented at the end of this section.  After the statement of algorithm a proof of its validity is given.  (We remind the reader that $h(T)$ denotes the common value $h(T^{+}) = h (T^{-})$, for a given Lorenz system $([0, 1], T^{\pm})$.)

\noindent	\parbox{15mm}{\textbf{Input:}} \textit{A Lorenz map $T^{\pm}$ with critical point $q$ and a tolerance $\epsilon \in (0, 1)$.}\vskip2mm

\noindent	\parbox{15mm}{\textbf{Output:}} \textit{An estimate to $h(T)$ within a tolerance of $\epsilon$.}
\begin{enumerate}
	\item[(1)] Compute: $\alpha \coloneqq \tau_{q}^{+}(q)$ and $\beta \coloneqq \tau_{q}^{-}(q)$.
	\item[(2)] Initialise: $a_{1} = 1$ and $a_{2} = 2$.
	\item[(3)] Set $a =\frac{ a_{1} + a_{2}}{2}$.
	\item[(4)] If both $\alpha\neq 0\overline{1}$ and $\beta\neq 1\overline{0}$ , then go to Step~(5), else go to Step~(4)(a).
	\begin{enumerate}
	\item[(a)] If both $\alpha= 0\overline{1}$ and $\beta \neq 1\overline{0}$, then compute $\mu^{+}_{a, a^{-1}}(a^{-1})$ and go to Step(11), else go to Step(4)(b).
	\item[(b)] Compute $\mu^{-}_{a,1-a^{-1}}(1-a^{-1})$ and go to Step~(12).
	\end{enumerate}
	\item[(5)] Compute: $\pi_{a}(\alpha)$ and $\pi_{a}(\beta)$.
	\item[(6)] Compute: $t_{1}(a) \coloneqq \max\{\pi_{a}(\alpha),1-a^{-1}\}$ and $t_{2}(a) \coloneqq \min\{\pi_{a}(\beta), a^{-1}\}$.
	\item[(7)] If $t_{1}(a) \geq t_{2}(a)$, then $a_{1} \gets a$ and go to Step(13), else go to Step(8).
	\item[(8)] Set $p = (t_{1}(a) + t_{2}(a))/2$.
	\item[(9)] Compute: $\mu^{+}_{a,p}(p)$ and $\mu_{a, p}^{-}(p)$.
	\item[(10)] If $\alpha \prec \mu^{-}_{a, p}(p)$ and  $\mu^{+}_{a, p}(p) \prec \beta$, then go to Step~(10)(a), else go to Step~(10)(b).
	\begin{enumerate}
	 \item[(a)] $a_{2} \gets a$ and go to Step(13).
	 \item[(b)] $a_{1} \gets a$ and go to Step(13).
	 \end{enumerate}
	 \item[(11)] If $\mu^{+}_{a, a^{-1}}(a^{-1})\prec\beta$, then $a_{2} \gets a$ and go to Step~(13), else $a_{1} \gets a$ and go to Step~(13).
	 \item[(12)] If $\alpha \prec \mu^{-}_{a, 1-a^{-1}}(1-a^{-1})$, then $a_{2} \gets a$ and go to Step(13), else $a_{1} \gets a$ and go to Step~(13).
	 \item[(13)] If $a_{2} - a_{1} < \epsilon/2$, then return $h(T) \in [\ln ((a_{1} + a_{2})/2 - \epsilon/4), \ln ((a_{1} + a_{2})/2 + \epsilon/4)]$ and terminate the algorithm, else go to Step(3).
\end{enumerate}

\begin{proof}[Proof of the validity of the Algorithm.]
The variable $a$ in the algorithm is the midpoint of the interval $[a_{1},a_{2}]$ which is initialized at $[a_{1},a_{2}] = [1, 2]$, and thus, $\ln (a_{1}) \leq h(T) < \ln (a_{2})$.   We will show that, throughout the algorithm, the following inequality is maintained, 
\begin{align}\label{eq:alg}
\ln (a_{1}) \leq h(T) \leq \ln (a_{2}).
\end{align}
A tolerance $\epsilon >0$ is fixed at the start.  At each iteration (Step~(3) to Step~(13)) of the algorithm, the length of this interval $[a_{1}, a_{2}]$ is halved until, at Step~(13), we arrive at $a_{2} - a_{1} < \epsilon/2$.  According to \eqref{eq:alg}, at this point we have estimated
the entropy within the desired tolerance $\epsilon \in (0, 1)$, specifically
\begin{align*}
 \ln \left( (a_{1} + a_{2})/2 - \epsilon/4 \right) \leq h(T) \leq \ln \left( (a_{1} + a_{2})/2 + \epsilon/4 \right).
\end{align*}
Suppose, in Step~(4), that $\alpha\neq 0\overline{1}$ and $\beta\neq 1\overline{0}$, namely, that the critical point $q$ is such that $f_{0}(q) \neq 1$ and $f_{1}(q) \neq 0$.  (Here, we remind the reader that $f_{0}: [0, q] \to [0, 1]$ and $f_{1}: [q,1] \to [0, 1]$ are the expanding maps which define the given $T^{\pm}$; see \Cref{Defn:Lorenz_map}.) At Step~(7) or Step~(10) the interval $[a_{1}, a_{2}]$ will be replaced by either $[a_1,a]$ or $[a, a_{2}]$, where $a$ has the value $(a_{1} + a_{2})/2$. It will now be proved that at each iteration (Step~(3) to Step~(13)), the inequalities given in \eqref{eq:alg} are maintained.  To see this we will follow the steps of the algorithm. At Step~(3), the value of $a$ is set to the value of the midpoint of the interval $[a_{1}, a_{2}]$. In Step~(5), the images of the critical itineraries  $\alpha$ and $\beta$ of the given Lorenz system $([0,1], T^{\pm})$ under $\pi_{a}$ are computed. In Step~(6), the values of $t_{1}(a)$ and $t_{2}(a)$ are set to the left and right endpoints, respectively, of an interval which, according to \Cref{lem:bounds}, has non-empty interior provided that $h(T) < \ln (a)$.  Thus, if $t_{1}(a) \geq t_{2}(a)$, then $h(T) \geq \ln (a)$.  In this case, the value of $a_{1}$ is reset to the value of $a$ in Step~(7) and the inequalities in \eqref{eq:alg} are maintained. The algorithm then proceeds to Step~(13).

On the other hand, if $t_{2}(a) > t_{1}(a)$, then in Step~(8) the value of $p$ is set to the midpoint of the interval $[t_{1}(a), t_{2}(a)]$.  In Step~(9) the algorithm computes  the critical itineraries,  $\mu^{+}_{a, p}(p)$ and $\mu_{a, p}^{-}(p)$,  of the uniform Lorenz systems $([0, 1], U_{a, p}^{\pm})$. In Step~(10) the algorithm compares $\mu^{-}_{a, p}(p)$ with $\alpha$ and compares $\mu^{+}_{a, p}(p)$ with $\beta$.  There are two possibilities, either both $\mu_{a, p}^{-}(p) \succ \alpha$ and $\mu_{a, p}^{+}(p) \prec \beta$ hold or not.  
\begin{enumerate}
\item If $\mu_{a, p}^{-}(p) \succ \alpha$ and  $\mu_{a, p}^{+}(p) \prec \beta$, then $h(T) \leq \ln (a)$, see \Cref{cor:cor2}. Therefore, to maintain the inequalities given in \eqref{eq:alg}, the value of $a_{2}$ is reset to the value of $a$.
\item Otherwise, we have $h(T) \geq \ln (a)$.  Since, if this was not the case, then this would contradict \Cref{Main_THM}.  Therefore, to maintain the inequalities given in \eqref{eq:alg}, the value of $a_{1}$ is reset to the value of $a$.
\end{enumerate}
In either of the above two case, the algorithm then proceeds to Step~(13).

Returning to Step~(4), suppose that $\alpha = 0\overline{1}$ and $\beta \neq 1\overline{0}$.  Observe, for each $a \in (1, 2)$, that $\mu_{a, a^{-1}}^{-}(a^{-1}) = \alpha = 0\overline{1}$.  There are now two possibilities, either $\mu_{a,a^{-1}}^{+}(a^{-1}) \prec \beta$ or not.
\begin{enumerate}\setcounter{enumi}{2}
\item If $\mu^{+}_{a, a^{-1}}(a^{-1}) \prec \beta$, then, by \Cref{cor:cor2} and since $\mu_{a, p}^{-}(p) = \alpha = 0\overline{1}$, we have that $h(T) \leq \ln(a)$.  Therefore, to maintain the inequalities given in \eqref{eq:alg}, the value of $a_{2}$ is reset to the value of $a$.  The algorithm then proceeds to Step~(13).  
\item If $\mu^{+}_{a, a^{-1}}(a^{-1}) \succeq \beta$, then, by \Cref{cor:cor2} and since $\mu_{a, p}^{-}(p) = \alpha = 0\overline{1}$, we have that $h(T) \geq \ln(a)$.  Therefore, to maintain the inequalities given in \eqref{eq:alg}, the value of $a_{1}$ is reset to the value of $a$.  The algorithm then proceeds to Step~(13).
\end{enumerate}
At Step~(13), provided that $a_{2} - a_{1} \geq \epsilon/2$, the algorithm proceeds to the next iteration, otherwise the algorithm returns the following value and terminates: $h(T^{+}) = h(T^{-}) \in [\ln ((a_{1} + a_{2})/2 - \epsilon/4), \ln ((a_{1} + a_{2})/2 + \epsilon/4)]$.  Similarly, if $\alpha \neq 0\overline{1}$ and $\beta = 1\overline{0}$, then in Step~(4)(b) of the algorithm the value of $p$ is set to $1-a^{-1}$ and the itinerary $\mu_{a, 1-a^{-1}}^{-}(1-a^{-1})$ is computed.  The algorithm then proceeds to Step~(12), where, to maintain the inequalities in \eqref{eq:alg}, the algorithm either 
\begin{enumerate}\setcounter{enumi}{4}
\item resets the value of $a_{2}$ to the value of $a$, if $\alpha \prec \mu^{-}_{a, 1-a^{-1}}(1-a^{-1})$, or
\item resets the value of $a_{1}$ to the value of $a$, if $\alpha \succeq \mu^{-}_{a, 1-a^{-1}}(1-a^{-1})$.
\end{enumerate}
The algorithm then goes to Step~(13); here it either goes to the next iteration or terminates.

Observe that the situation where $\alpha = 0\overline{1}$ and $\beta = 1\overline{0}$ cannot occur, since by definition of the itineraries, this would immediately imply that $f_{0}(q) = 1$ and $f_{1}(q) = 0$.  Thus, the given system is not a Lorenz system as it would violate condition (i) of \Cref{Defn:Lorenz_map}.
\end{proof}

\subsection{Sample results.}\label{Sec:Sample_results}

Presented below are two examples that demonstrates an implemented version of our algorithm.  These examples indicate that the algorithm returns an accurate estimate for the entropy of a Lorenz system.  To practically implement the algorithm, itineraries are computed to a prescribed length $n \geq 3$, which is called the \textit{truncation term} and is an additional input to the algorithm.  

\begin{example}\label{exmp:exampl1}
Consider the Lorenz map $T^{\pm}$ with critical point $q$ given by $f_{0}(x) = a\sqrt{x}$ and $f_{1}(x) = b x  + 1 - b$, where $a = 1.25$, $b = (a^{-6}-1)/(a^{-2}-1)$ and $q =1/a^{2}$.  The reason for this choice of $a, b, q$ is that, in this case, there is a theoretical method for determining the topological entropy of the map $T^{\pm}$. This allows us to compare the estimated value for the entropy given by our algorithm to the actual value.  To be more precise, to theoretically determine  the topological entropy we use the fact that, for this choice of $a,b,q$, the critical itineraries are periodic and therefore this Lorenz map is Markov. For Markov maps the topological entropy is the logarithm of the maximum eigenvalue of the associated adjacency matrix \cite[Proposition~3.4.1]{BS}.  Using this method we obtain that $h(T^{\pm}) = \ln((1+\sqrt{5})/2) \approx 0.4812118251$.  The following table gives the output of a practically implemented version of our algorithm for this map, where $\epsilon$ denote the tolerance term and $n$ denotes the truncation term.
\begin{small}
\begin{table}[h]
\begin{tabular}{lccc}
\hline
& $\epsilon=10^{-2}$ & $\epsilon=10^{-4}$ & $\epsilon=10^{-6}$\\
\hline
$n = 10$ & 0.4831010758 & 0.4811979105 & 0.4812117615\\
$n = 100$ & 0.4831010758 & 0.4811979105 & 0.4812117615\\
$n = 1,000$ & 0.4831010758 & 0.4811979105 & 0.4812117615\\
$n = 10,000$ & 0.4831010758 & 0.4811979105 & 0.4812117615\\
\hline
\end{tabular}
\end{table}
\end{small}
\end{example} 

\begin{example}
Here we consider the uniform Lorenz map $U^{\pm}_{a, 1/2}$ and the uniform Lorenz map $U^{\pm}_{a, a^{-1}}$ for $a = \sqrt{2}$, which, by \Cref{lemma1}, both have topological entropy equal to $\log(\sqrt{2}) \approx 0.34657359023$.  The following table gives the output of a practically implemented version of our algorithm for these maps, where $\epsilon$ denote the tolerance and $n$ denotes the truncation term.
\begin{small}
\begin{table}[h]
\begin{tabular}{lcccc}
\hline
& \multicolumn{2}{c}{$p=1/2$} & \multicolumn{2}{c}{$p=a^{-1}=1/\sqrt{2}$}\vspace{2mm}\\
& $\epsilon=10^{-3}$ & $\epsilon=10^{-6}$ & $\epsilon=10^{-3}$ & $\epsilon=10^{-6}$\\
\hline
$n = 10$ & 0.3652803888 & 0.3655560121 & 0.3475021428 & 0.3471925188\\
$n = 100$ & 0.3468120116 & 0.3465736575 & 0.3468120116 & 0.3465736575\\
$n = 1,000$ & 0.3468120116 & 0.3465736575 & 0.3468120116 & 0.3465736575\\
$n = 10,000$ &  0.3468120116 & 0.3465736575 & 0.3468120116 & 0.3465736575\\
\hline
\end{tabular}
\end{table}
\end{small}
\end{example} 

\section*{Acknowledgements}

The first author was supported by {\footnotesize EPSRC:EP/PHDPLUS/AMC3/ DTG2010} and partially by {\footnotesize ARC:DP0984353}.  The second author was partially supported by {\footnotesize ARC: DP0558974}.  Our thanks also go to M.\ Barnsley for hosting us at ANU and for providing the initial motivation, and to J.\ Keesling for his initial indispensable suggestions.  Finally the authors thank the unknown referees for their valuable suggestions.

\bibliographystyle{plain}
\bibliography{bib}

\end{document}